\documentclass[oneside,10pt]{amsart}
\usepackage{amsmath}
\usepackage{amssymb}
\usepackage{amsthm}
\usepackage{float}
\usepackage{ulem}
\usepackage{xcolor}
\usepackage[utf8x]{inputenc}
\usepackage{comment}
\usepackage{framed}
\usepackage{caption}
\usepackage{graphicx} 

\newcommand{\abs}[1]{\left\vert#1\right\vert}

\newcommand{\eps}{\varepsilon}

\DeclareMathAlphabet{\mathbbb}{U}{bbold}{m}{n}

\newtheorem{thm}{Theorem}[section]
\newtheorem{cor}[thm]{Corollary}
\newtheorem{lem}[thm]{Lemma}
\newtheorem{prop}[thm]{Proposition}
\theoremstyle{definition}

\theoremstyle{remark}
\newtheorem{rem}[thm]{Remark}

\numberwithin{equation}{section}
\usepackage{graphicx}

\usepackage{enumerate}

\newcommand{\matrice}{\begin{pmatrix}}
\newcommand{\ok}{\end{pmatrix}}

\begin{document}
\title[]{Isoperimetric inequalities and sharp upper bounds for Aharonov-Bohm eigenvalues on surfaces}
\thanks{The first author acknowledges support of INDAM-GNAMPA. The second and third authors acknowledge support of INDAM-GNSAGA. The second author acknowledges support of the INDAM-GNSAGA 2025 project ``Analisi geometrica e teoria spettrale su varietà riemanniane ed hermitiane'' and of the the project ``Perturbation problems and asymptotics for elliptic differential equations: variational and potential theoretic methods" funded by the European Union – Next Generation EU and by MUR-PRIN-2022SENJZ3. The authors would like to thank the Isaac Newton Institute for Mathematical Sciences, Cambridge, for support and hospitality during the programme Geometric spectral theory and applications, where work on this paper was undertaken. This work was supported by EPSRC grant EP/Z000580/1.}

\author[Michetti]{Marco Michetti}
\address{Department of Mathematical Sciences, Chalmers University of Technology and the University of Gothenburg, SE-41296 Gothenburg, Sweden, e-mail: {\sf michetti@chalmers.se}.}
\author[Provenzano]{Luigi Provenzano}
\address{Dipartimento di Scienze di Base e Applicate per l'Ingegneria, Universit\`a di Roma ``La Sapienza'', Via Scarpa 12 - 00161 Roma, Italy, e-mail: {\sf luigi.provenzano@uniroma1.it}.}
\author[Savo]{Alessandro Savo}
\address{Dipartimento di Scienze di Base e Applicate per l'Ingegneria, Universit\`a di Roma ``La Sapienza'', Via Scarpa 12 - 00161 Roma, Italy, e-mail: {\sf alessandro.savo@uniroma1.it}.}

\begin{abstract}
We consider the first eigenvalue of the magnetic Laplacian with zero magnetic field on simply connected compact surfaces and we establish isoperimetric inequalities and upper bounds in terms of a bound on the gaussian curvature. As a corollary, we prove that among all simply connected spherical domains of fixed area, the first eigenvalue is maximal for a geodesic disk with  the pole of the magnetic potential at its center; also, for the sphere punctured at two points, the first eigenvalue is maximal when the punctures are antipodal.

\end{abstract}

\keywords{Magnetic Laplacian, ground state, optimization, Aharonov-Bohm magnetic potential}
\subjclass{35P15, 49Rxx, 58J50, 81Q10}

\maketitle

\section{Introduction}
In this paper we consider the first eigenvalue, or {\it ground state energy}, of the magnetic Laplacian on simply connected domains of the sphere and, more generally, on compact, simply connected, Riemannian surfaces. 

\smallskip

Let $(M,g)$ be a compact Riemannian manifold (of any dimension, with or without boundary). A {\it magnetic potential} is a smooth real $1$-form $A$ on $M$. It defines a {\it magnetic differential} $d^A$ acting on smooth, complex-valued functions as follows:
$$
d^Au=du-iuA,
$$
and a {\it magnetic codifferential} $\delta^A$, acting on $1$-forms $\omega$ as $\delta^A(\omega)=\delta\omega+i\langle A,\omega\rangle_g$. Here $\delta$ is the usual codifferential associated with the metric $g$, which in $\mathbb R^n$ with the Euclidean metric is $-{\rm div}$. The {\it magnetic Laplacian} acts on smooth complex-valued functions as follows:
$$
\Delta_Au=\delta^Ad^Au.
$$
When $\partial M\ne\emptyset$, we consider magnetic Neumann boundary conditions $d^Au(N)=0$, which in terms of the dual vector field  $\nabla^Au:=\nabla u-iuA^{\sharp}$, read $\langle\nabla^Au,N\rangle=0$, where $N$ is the unit outer normal to $\partial M$. Here $A^{\sharp}$ denotes the vector field dual to the $1$-form $A$; the expression $\nabla^Au$ denotes the {\it magnetic gradient} of $u$. Then $\Delta_A$ defines a compact self-adjoint operator on $L^2(M,dv_g)$, hence it has a non-negative spectrum made of eigenvalues of finite multiplicity diverging to $+\infty$:
$$
0\leq\mu_1(M,g,A)\leq\mu_2(M,g,A)\leq\cdots\leq\mu_k(M,g,A)\leq\cdots\nearrow+\infty.
$$
As usual, the eigenvalues are characterized by the min-max principle:
$$
\mu_k(M, g, A)=\min_{{\substack{U\subset H^1_A(M)\\{\rm dim}U=k}}}\max_{0\ne u\in U}\frac{\int_{M}|d^Au|_g^2dv_g}{\int_{M}|u|^2dv_g},
$$
where $H^1_A(M)$ is the {\it magnetic Sobolev space}, namely, the space of complex-valued functions in $L^2(M,dv_g)$ with $d^Au$ in $L^2(M,dv_g)$. If $A$ is smooth, this space coincides with the usual Sobolev space $H^1(M,dv_g)$.
Here and in what follows we drop the dependence of the eigenvalue on the metric when it is clear from the context, and if $(M,g)$ is a Riemannian manifold, we write simply
$$
\mu_k(M,g,A)\doteq\mu_k(M,A).
$$

\smallskip

Analogously, one can define  the magnetic Dirichlet eigenvalues:
$$
0<\lambda_1(M,g,A)\leq\lambda_2(M,g,A)\leq\cdots\leq\lambda_k(M,g,A)\leq\cdots\nearrow+\infty
$$
which are characterized by
\begin{equation}\label{ev_dir}
\lambda_k(M, g, A)=\min_{{\substack{U\subset H^1_{0,A}(M)\\{\rm dim}U=k}}}\max_{0\ne u\in U}\frac{\int_{M}|d^Au|_g^2dv_g}{\int_{M}|u|^2dv_g},
\end{equation}
where $H^1_{0,A}(M)$ is the subspace of $H^1_A(M)$ of functions with zero boundary trace. Also in this case we set $\lambda_k(M,g,A)\doteq\lambda_k(M,A)$. The magnetic Dirichlet eigenvalues will appear in the proof of the main results.

\subsection{Gauge invariance.} A fundamental feature of the magnetic Laplacian is {\it gauge invariance}. Recall that the {\it flux} of a $1$-form $A$ around a closed loop $c$ in $M$ is defined as $\frac{1}{2\pi}\oint_cA$. We have the following fundamental theorem which characterizes potentials that can be ``{\it gauged-away}'' (see \cite{Sh}, see also \cite[Appendix 5]{CEIS}).
\begin{thm}\label{shi}

\begin{enumerate}[a)]
\item Assume that $A,A'$  are such that $A-A'$ is closed and $\frac{1}{2\pi}\oint_c(A-A')\in\mathbb Z$ for  any simple closed curve $c$ in $M$. Then $\mu_k(M,A)=\mu_k(M,A')$.
\item One has $\mu_1(M,A)=0$ if and only if $A$ is closed and 
$\frac{1}{2\pi}\oint_cA\in\mathbb Z$ for  any simple closed curve $c$ in $M$.
\end{enumerate}
\end{thm}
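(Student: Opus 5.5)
The plan for part a) is to build a unitary gauge transformation that intertwines $\Delta_A$ and $\Delta_{A'}$. Set $\omega=A-A'$, which is closed. Fix a base point $x_0\in M$ and define
$$
\phi(x)=\int_{\gamma_x}\omega \pmod{2\pi},
$$
where $\gamma_x$ is any piecewise smooth path from $x_0$ to $x$. Two such paths differ by a closed loop, so $\phi$ is well defined modulo $2\pi$ as soon as $\oint_c\omega\in2\pi\mathbb Z$ for every closed loop $c$. The hypothesis only covers simple closed curves, so the first step is to reduce to these. Since $\omega$ is closed, $\oint_c\omega$ depends only on the homology class of $c$. Every closed loop is homotopic to a concatenation of simple closed loops: perturb it to be immersed with transverse double points, then split it at each self-intersection. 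Integrality therefore passes to all loops.

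Next, define $f=e^{i\phi}$. This is a smooth map $M\to S^1$ with $df=if\,\omega$. For $u\in H^1(M)$ put $v=\bar f u$, so that $|v|=|u|$. A direct computation gives
$$
d^{A'}v=\bar f\,du-i\bar f\,\omega\,u-iv A'=\bar f\,(du-iuA)=\bar f\,d^{A}u,
$$
hence $|d^{A'}v|=|d^Au|$ pointwise. Multiplication by $\bar f$ is a unitary bijection of $L^2$ that preserves $H^1$, and it preserves the Rayleigh quotients. The min-max characterization then gives $\mu_k(M,A)=\mu_k(M,A')$. The Neumann condition needs no separate treatment, since it is natural for the quadratic form. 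The same argument works for the Dirichlet eigenvalues, because $H^1_0$ is also preserved.

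For part b), the ``if'' direction is part a) applied with $A'=0$, since $\mu_1(M,0)=0$ (constants). For the converse, let $u$ be a first eigenfunction with $\int_M|d^Au|^2=0$, so $du=iuA$. Then
$$
d|u|^2=2\,\mathrm{Re}(\bar u\,du)=2\,\mathrm{Re}(i|u|^2A)=0,
$$
so $|u|$ is constant, and nonzero on the connected manifold $M$. By elliptic regularity $u$ is smooth. Normalize $|u|=1$ and write locally $u=e^{i\theta}$; then $A=d\theta$, so $A$ is locally exact and hence closed. Along any closed curve $c$, the quantity $\frac1{2\pi}\oint_cA$ is the winding number of $u\circ c:S^1\to S^1$, which is an integer.

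I expect the main obstacle to be the step in a) passing from simple closed curves to arbitrary loops, especially on manifolds with boundary or in higher dimension. I will handle it with the homological argument: $H_1(M;\mathbb Z)$ is generated by classes of simple closed curves, and closedness of $\omega$ makes the period map a homomorphism on $H_1$. If that route gets delicate, the fallback is to note that in dimension $\geq3$ a generic loop is already embedded, while in dimension two the self-intersections are transverse and the loop can be decomposed at them.
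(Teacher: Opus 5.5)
Your proof is correct and is essentially the standard argument. The paper does not prove Theorem~\ref{shi} itself: it quotes it from \cite{Sh} and \cite[Appendix 5]{CEIS}, where a) is obtained by multiplying by the unimodular function $e^{i\phi}$ with $d\phi=A-A'$ (well defined because the periods of $A-A'$ lie in $2\pi\mathbb Z$), and b) by noting that a zero-energy ground state has constant modulus, so $A=-i\bar u\,du$ has integral periods. Your additional step, splitting a generic loop at its transverse double points and using additivity of $\oint\omega$ to pass from simple closed curves to all loops, correctly fills the small gap between the paper's phrasing of the hypothesis and the cohomological integrality condition used in those references.
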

In particular, if the flux of $A$ around any closed curve in $M$ is an integer (in particular, if $A$ is exact) then the magnetic spectrum coincides with the usual Laplacian spectrum for Neumann boundary conditions, which we simply denote $\mu_k(M)$: that is 
$$
\mu_k(M,A)=\mu_k(M) \quad\text{for all $k$}.
$$
According to this numbering $\mu_1(M)=0$ so that the first {\it positive} Neumann eigenvalue is then $\mu_2(M)$.

We remark that, classically, the $2$-form $B\doteq dA$ is the {\it magnetic field} associated to $A$. We then remark that the magnetic field does not determine the spectrum; in particular, the magnetic field could be zero and yet the lowest eigenvalue (ground state energy) is positive; this happens when the flux of $A$ around some curve is not an integer.

\subsection{Aharonov-Bohm potentials}

The scope of the present paper is precisely to estimate the ground state energy when the magnetic field is zero on the regular part of singular potentials called {\it Aharonov-Bohm potentials}, which are the object of intensive work, both from the physical and the mathematical point of view. 

On a simply connected domain $\Omega$, an Aharonov-Bohm potential with pole at $x_0\in\Omega$ is a closed smooth $1$-form on $\Omega\setminus\{x_0\}$.  The circulation of $A$ around $\partial\Omega$ (and around any loop enclosing $x_0$) is a real number $\nu$ which is called the {\it flux} of $A$.

In $\mathbb R^2$, an example of Aharonov-Bohm potential with pole at the origin and flux $\nu$ is
$$
A^{(\nu)}_0=\nu\left(-\frac{y}{x^2+y^2}dx+\frac{x}{x^2+y^2}dy\right).
$$
 We consider magnetic Neumann boundary conditions, so that the spectrum is discrete (see e.g., \cite{CPS22}). By what we said above $\mu_1(\Omega,A^{(\nu)}_0)>0$ if and only if $\nu\notin\mathbb Z$.  

 We remark that any two Aharonov-Bohm potentials with pole at $x_0$ and flux $\nu$ differ by an exact form, hence give rise to the same spectrum. 

In this paper we are interested in shape optimization results for the first Aharonov-Bohm eigenvalue of compact, simply connected surfaces (with or without boundary). 

\subsection{State of the art}
\ \\
{\bf Punctured domains in $\mathbb R^2$.} The following isoperimetric inequality has been proved in \cite{CPS22}: for any bounded, smooth domain $\Omega\subset\mathbb R^2$ and any flux $\nu$ we have 
\begin{equation}\label{CPS}
\mu_1(\Omega,A^{(\nu)}_{x_0})\leq \mu_1(D,A^{(\nu)}_0)
\end{equation}
for all $x_0\in\Omega$.
Here $D$ is the disk centered at $0$ with the same area: $\abs{\Omega}=\abs{D}$. Equality holds if and only if $\Omega=D$ and $x_0=0$. In fact, the isoperimetric inequality \eqref{CPS} holds for any domain (not necessarily simply connected). The proof uses the classical argument of Weinberger for the second eigenvalue of the Neumann Laplacian \cite{weinberger}. Note that in polar coordinates $(r,\theta)$ around $0$, $A^{(\nu)}_0=\nu d\theta$. The result holds also for bounded, smooth domains of the hyperbolic plane $\mathbb H^2$, with the same proof.

\smallskip
\noindent{\bf Spherical domains.}  On the two-dimensional round sphere $\mathbb S^2$, fix a point $x_0$ and consider polar coordinates $(r,\theta)$ based at $x_0$, where $r$ is the geodesic distance from $x_0$. Then $A_{x_0}^{(\nu)}\doteq\nu d\theta$ defines a smooth $1$-form on $\mathbb S^2\setminus\{x_0,-x_0\}$ which is singular at the two poles: $x_0$ and $-x_0$, and defines a Aharonov-Bohm potential on $\mathbb S^2$. We note that the flux around any loop that separates $x_0$, $-x_0$ is equal to $\nu$. The following inequality of isoperimetric type is proved in  \cite{CPS22}.

\begin{thm}\label{cps} Fix $x_0\in\mathbb S^2$ and let $\Omega$ be a smooth bounded domain that contains $x_0$ but not $-x_0$. Let $D$ be the spherical cap centered at $x_0$ with the same area: $\abs{\Omega}=\abs{D}$. Then, the isoperimetric inequality 
\begin{equation}\label{sphereCPS}
\mu_1(\Omega,A^{(\nu)}_{x_0})\leq \mu_1(D,A^{(\nu)}_{x_0})
\end{equation}
holds for all $\nu\in\mathbb R$ under any of the following assumptions:

\begin{enumerate}[a)]
\item $\Omega$ is contained in the hemisphere centered at $x_0$;

\item $\Omega$ is simply connected with area $\abs{\Omega}\leq \frac 12\abs{\mathbb S^2}=2\pi$.
\end{enumerate}
If $\nu\notin\mathbb Z$, equality holds in a) and b) if and only if $\Omega$ is the spherical cap $D$ centered at $x_0$.
\end{thm}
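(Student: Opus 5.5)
The plan is to follow the Weinberger strategy adapted in \cite{CPS22}. First we study the model problem on the cap $D=B(x_0,R)$. After separating variables, the ground state of $\Delta_{A}$ with $A=\nu d\theta$ has the form $u=f(r)e^{ik\theta}$, where $k$ is the integer closest to $\nu$; by gauge invariance (Theorem~\ref{shi}) we may assume $\nu\in(0,1/2]$ and $k=0$. The radial profile $f$ solves
\[
-f''-\frac{\cos r}{\sin r}f'+\frac{\nu^2}{\sin^2 r}f=\mu f,\qquad f'(R)=0,
\]
and $f(0)=0$ when $\nu\notin\mathbb Z$. We then extend $f$ to $[0,\pi)$ by setting $G(r)=f(r)$ for $r\le R$ and $G(r)=f(R)$ for $r\ge R$. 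We must check that $f$ is increasing on $[0,R]$ and that the function
\[
F(r)=G'(r)^2+\frac{\nu^2G(r)^2}{\sin^2 r}
\]
is decreasing on $(0,\pi)$. For $r\ge R$ this reduces to the monotonicity of $1/\sin^2 r$, which holds only for $r\le\pi/2$. This is where hypotheses a) and b) enter.

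Next we construct a test function for $\Omega$. Since $\Omega$ avoids $-x_0$, the function $v=G(r)$, with $r=d(x_0,\cdot)$, is well defined on $\Omega$. Then $|d^Av|^2=G'^2+\nu^2G^2/\sin^2 r=F(r)$, and inserting $v$ into the min-max characterization gives
\[
\mu_1(\Omega,A)\le\frac{\int_\Omega F(r)}{\int_\Omega G(r)^2}.
\]
No orthogonality condition is needed, because this is the first eigenvalue; this is much simpler than Weinberger's setting, where the center of mass has to be adjusted. The comparison now follows from the standard rearrangement argument. Since $F$ is decreasing, $\int_\Omega F\le\int_D F$; since $G^2$ is nondecreasing, $\int_\Omega G^2\ge\int_D G^2$. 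Both follow from comparing $\Omega\setminus D$ with $D\setminus\Omega$, which have equal area, using that every point of $\Omega\setminus D$ satisfies $r\ge R$ and every point of $D\setminus\Omega$ satisfies $r<R$. Finally, $\int_D F\big/\int_D G^2=\mu_1(D,A)$.

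Under assumption a) we have $r\le\pi/2$ on $\Omega$, so $F$ is decreasing along the whole range that matters. Assumption b) is the main obstacle, since $\Omega$ may reach beyond the hemisphere. Here we would use simple connectivity together with $|\Omega|\le 2\pi$: it gives $R\le\pi/2$. We would then handle the region $r>\pi/2$ in one of two ways. The first option is to modify $G$ there, replacing the constant $f(R)$ by a nonincreasing or otherwise tuned profile so that $F$ stays decreasing, and use the area bound to control how much mass $\Omega$ can place beyond the equator. Failing that, we would use the fact that $\Omega$ is simply connected and omits $-x_0$ to run a symmetrization/reflection argument: reflect the part of $\Omega$ beyond the equator into the hemisphere, which keeps the area and does not decrease the relevant integrals. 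I expect this step, making case b) work, to be the hardest.

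For the equality case with $\nu\notin\mathbb Z$: equality forces $|\Omega\setminus D|=0$ in the strict-monotonicity comparison, hence $\Omega=D$.
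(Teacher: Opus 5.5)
\paragraph{Part a).} Your argument here is the Weinberger argument that \cite{CPS22} uses; the paper quotes this theorem from there rather than reproving it. It is fine in outline, but you still owe its computational core: that $f'>0$ and that $F$ is nonincreasing on $(0,R)$, which comes from differentiating $F$ and using the radial equation.

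\paragraph{The gap is in b).} No choice of extension of $G$ beyond the equator can close it: test functions depending only on $r=d(x_0,\cdot)$ cannot prove b).
\begin{itemize}
\item Your scheme amounts to showing $\int_\Omega H\le 0$ for $H=F-\mu_1(D)G^2$. Taking $\Omega=D$ forces $G=f$ on $D$ up to a constant, so $\int_D H=0$.
\item Suppose $H(r_0)>H(R)$ for some $r_0>R$. Remove from $D$ a thin shell of area $\delta$ along $\partial D$. Attach, through a corridor of negligible area, a small disk of area $\delta$ at distance about $r_0$ from $x_0$. The new domain is simply connected, has area $\abs{D}\le 2\pi$, avoids $-x_0$, and satisfies $\int_\Omega H\approx\delta\,(H(r_0)-H(R))>0$.
\item So you need $H\le H(R)=(\nu^2/\sin^2R-\mu_1(D))f(R)^2=f(R)f''(R)<0$ at every radius $\Omega$ can reach. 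Under b) this is all of $(R,\pi)$. But near $r=\pi$ one has $H\ge(\nu^2/\sin^2 r-\mu_1(D))\,G^2\ge 0$ for any profile $G$.
\end{itemize}
Your two fallback options do not help either. The area bound does not prevent a fixed small amount of area from sitting near $-x_0$. Reflecting the part of $\Omega$ beyond the equator changes the domain, not the Rayleigh quotient on $\Omega$, and $\mu_1$ has no monotonicity under that operation. Note also that you never use simple connectivity, which is exactly the hypothesis b) needs.

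\paragraph{The missing idea.} Take test functions adapted to $\Omega$ instead of to the spherical distance. Let $\psi_{x_0}$ be the Green function of $\Omega$ with pole $x_0$. Because $\Omega$ is simply connected, $\nu\,d\theta$ and $-2\pi\nu\star d\psi_{x_0}$ are both closed on $\Omega\setminus\{x_0\}$ with the same flux. They therefore differ by an exact form and give the same spectrum.

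\cite{CPS22} proves b) by Szeg\"o's conformal transplantation, and the restriction $\abs{\Omega}\le 2\pi$ is an artifact of that method. The paper removes the restriction (Theorem \ref{main_boundary}, Corollary \ref{cor_boundary_spherical}) by using $u=g\circ\psi_{x_0}$:
\begin{itemize}
\item The energy density $(g'^2+4\pi^2\nu^2g^2)\abs{d\psi_{x_0}}^2$ has no singularity near $-x_0$.
\item The coarea formula reduces the Rayleigh quotient to a Sturm--Liouville problem with weight $G(a)=\int_{\{\psi_{x_0}=\beta(a)\}}\abs{d\psi_{x_0}}^{-1}$.
\item The superlevel sets of $\psi_{x_0}$ are topological disks. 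Cauchy--Schwarz plus the isoperimetric inequality $\ell^2\ge\mathcal A(4\pi-\mathcal A)$ on them gives $G\ge a(4\pi-a)=G^{\star}$ (Lemma \ref{lem:isoper}).
\item The first eigenvalue of the one-dimensional problem is nonincreasing in the weight (Lemma \ref{lem:optimsturmmliouville}). For the weight $G^{\star}$ it equals $\mu_1(D,A^{(\nu)}_{x_0})$.
\end{itemize}
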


We remark that under any of the assumptions a) and b), the isoperimetric inequality is known to hold for the lowest positive eigenvalue of the usual Neumann Laplacian, that is:
\begin{equation}\label{classical}
\mu_2(\Omega)\leq\mu_2(D);
\end{equation}

\noindent precisely,  it holds under assumption a) by an argument of Weinberger (see \cite{ash_beng}),
and it holds under assumption b) by conformal transplantation (see Bandle \cite{bandle2} and  Szeg\"o \cite{szego_1}). The proof of Theorem \ref{cps} is an adaptation of Weinberger's and
Szeg\"o' s arguments.  

We point out a recent growing interest in the eigenvalues of the Neumann Laplacian on spherical domains. In particular, we mention \cite{laugesen} where, in the inequality \eqref{classical} for simply connected domains, the area bound is relaxed; precisely  from $|\Omega|\leq 0.5|\mathbb S^2|$  to $|\Omega|\leq \frac{16}{17}|\mathbb S^2|$: in other words, \eqref{classical}  holds for simply connected domains with area at most $94\%$ that of the sphere.  In the recent paper \cite{bucur_sphere} the authors show that if the hypothesis on simply-connectedness is dropped, then spherical caps are not necessarily maximisers of the second Laplace eigenvalue among domains of fixed area. 

The approach we propose in this paper for the study of isoperimetric inequalities and eigenvalue upper bounds is different from those of Szeg\"o and Weinberger. In fact, among other results, we prove here that Theorem \ref{cps}, b) holds with no restrictions on $|\Omega|$. We remark that adapting the method to the case of the second Neumann eigenvalue of the standard Laplacian one can prove \eqref{classical} for all simply connected spherical domains, with no restrictions on the area. This is done in \cite{PS_sphere}.

\subsection{Aims of the paper}

Let us give an analytical description of an Aharonov-Bohm potential on a compact, simply  connected Riemannian surface $\Omega$ with non-empty boundary $\partial\Omega$ and pole $z_0$.  
Consider, on the punctured domain $\Omega\setminus\{z_0\}$ the smooth $1$-form
$$
A_{z_0}^{(\nu)}\doteq -2\pi\nu\star d\psi_{z_0}\,,
$$
where $\psi_{z_0}$ is the Green function with pole at $z_0$, which is defined by
\begin{equation}\label{green_boundary}
\begin{cases}
\Delta\psi_{z_0}=\delta_{z_0}\,, & {\rm in\ } \Omega\\
\psi_{z_0}=0\,, & {\rm on\ }\partial \Omega,
\end{cases}
\end{equation}
and $\star$ is the Hodge-star operator associated with the Riemannian metric. Note that $A_{z_0}^{(\nu)}$ is closed and has flux $\nu$ around $\partial\Omega$ (and around  any loop enclosing $z_0$).

\smallskip

On the other hand, if $\Sigma$  is a compact, simply connected Riemannian surface with empty boundary, an Aharonov-Bohm potential is determined by the flux $\nu$ and two poles $z_1,z_2$. We set, on $\Sigma\setminus\{z_1,z_2\}$:
$$
A_{z_1,z_2}^{(\nu)}\doteq -2\pi\nu\star d\psi_{z_1,z_2}\,,
$$
where $\psi_{z_1,z_2}$ is the Green function with poles at $z_1, z_2\in\Sigma$, namely
\begin{equation}\label{green_closed}
\Delta\psi_{z_1,z_2}=\delta_{z_1}-\delta_{z_2}\,,\ \ \ {\rm in\ } \Sigma.
\end{equation}
In this case, $\nu$ is the flux of $A^{(\nu)}_{z_1,z_2}$ around any loop separating the poles and $\delta_P$ denotes the Dirac delta measure at a point $P$.

\smallskip

As we already mentioned, in the case of a geodesic disk in $\mathbb S^2$ with pole $x_0$ in its center, $A^{(\nu)}_{x_0}$ takes the simpler form $A^{(\nu)}_{z_0}=\nu d\theta$, where $(r,\theta)$ are the standard polar coordinates centered at $x_0$ (with $r$ being the distance from $x_0$). This is also the form of the Aharonov-Bohm potential on the standard sphere $\mathbb S^2$ with antipodal poles $x_0$, $-x_0$.

\smallskip

In both cases the spectrum is well-behaved, and we are interested in isoperimetric inequalities and upper bounds for the first eigenvalue, which is positive as long as $\nu\notin\mathbb Z$.

\smallskip

The inequalities are obtained by employing the {\it method of prescribed level lines} which could be dated back to Rayleigh and Ritz (see e.g., \cite{polya_szego}), and by applying it to the Green function of $\Omega$ or $\Sigma$ as above. The same method, using test functions which are constant on the level lines of the {\it torsion} function, has been used in \cite{CLPS_reverse} to establish a reverse Faber-Krahn inequality for the first eigenvalue of the magnetic Laplacian with constant magnetic field on simply connected planar domains, in the weak field regime. 

\smallskip

First, we consider the isoperimetric problem and prove that among all simply connected surfaces with boundary, having gaussian curvature bounded above by $1$ and fixed area smaller than $|\mathbb S^2|=4\pi$, the first eigenvalue is maximal for a geodesic disk in $\mathbb S^2$ with the pole of the potential at its center. In particular, among all simply connected spherical domains of fixed area, the geodesic disk with pole at the center is maximal. Note that we have no restriction on the area, extending in this way the result of \cite{CPS22}, in particular Theorem \ref{cps}, b). 

 \smallskip

Another natural question is the following. Consider all simply connected, closed (i.e. compact with empty boundary) Riemannian surfaces of fixed area punctured at two points:  {\it does there exist a maximiser for the first Aharonov-Bohm  eigenvalue?} For the second eigenvalue of the usual Laplacian the answer is positive: it is the round sphere $\mathbb S^2$, and the result is due to Hersch \cite{hersch_sphere}, and can be conveniently stated in terms of the lowest normalized eigenvalue, which is invariant by homotheties:
$$
\sup_{\Sigma}\abs{\Sigma}\mu_2(\Sigma)=\abs{\mathbb S^2}\mu_2(\mathbb S^2)=8\pi.
$$

A bit surprisingly, in the magnetic case there is no maximiser for $\abs{\Sigma}\mu_1(\Sigma,A^{(\nu)}_{z_1,z_2})$; in fact,  if $\nu\notin\mathbb Z$:
$$
\sup_{\substack \Sigma,\,{z_1,z_2\in\Sigma}}\abs{\Sigma}\mu_1(\Sigma,A^{(\nu)}_{z_1,z_2})=+\infty,
$$
see Theorem \ref{nohersch}. The point is that when the flux is not an integer, the first eigenvalue depends on the positions of the poles. When the flux is an integer,  the position of the poles have no influence on the spectrum, by gauge invariance. In this paper we prove an upper bound which depends only on an upper bound on the gaussian curvature $K$, see Theorem \ref{main_closed}.  The dependence on $K$ is sharp.

 \smallskip If the surface is $\mathbb S^2$ punctured at two poles, a natural question is then: {\it what is the optimal position of the poles, for which the first eigenvalue is maximal?} We show in Corollary \ref{cor_2poles} that the two poles must be antipodal (in the appendix we compute explicitly the spectrum in that case).

 \smallskip
Even if the focus of the paper is on simply connected surfaces, with or without boundary, we study here also the case of Riemannian annuli (surfaces diffeomorphic to $\mathbb S^1\times [0,1]$). Then we are able to prove that the first Aharonov-Bohm eigenvalue is maximal for the unique flat cylinder in a given conformal class.

\subsection{Organization of the paper}The paper is organized as follows: in Section \ref{sec_main_results} we state the main results, namely Theorem \ref{main_boundary} (isoperimetric inequality) and \ref{main_closed} (sharp upper bound) and their consequences on spherical domains, namely Corollaries \ref{cor_boundary_spherical} and \ref{cor_2poles}. In Section \ref{sec_main_boundary} we prove Theorem \ref{main_boundary}. In Section \ref{sec_main_closed} we prove Theorem \ref{main_closed} and discuss its sharpness. We have included two appendices. In Appendix \ref{appendix} we compute explicitly the magnetic spectrum of the sphere with opposite poles and discuss other properties of eigenvalues on spherical domains. In Appendix \ref{appendixb} we discuss the case of Riemannian annuli.

\section{Main results} \label{sec_main_results}

 
\subsection{Isoperimetric inequality for simply connected surfaces with boundary} For a constant $K>0$, we let
$\mathcal M_K$ denote the family of compact simply connected Riemannian surfaces with smooth boundary and gaussian curvature bounded above by $K$. When $\abs{\Omega}\leq \abs{\mathbb S^2_K}$ the model domain for our eigenvalue problem will be a spherical cap in  $\mathbb S^2_K$ with same area (eventually, the whole sphere when $\abs{\Omega}= \abs{\mathbb S^2_K}$). We get in that case:
 
\begin{thm}\label{main_boundary} Let $\Omega\in \mathcal M_K$, fix a pole $z_0\in\Omega$ and let $A_{z_0}^{(\nu)}$ be an Aharonov-Bohm potential with arbitrary flux $\nu\in\mathbb R$ and pole $z_0$. Assume that 
$\abs{\Omega}\leq \abs{\mathbb S^2_K}$. Then
$$
\mu_1(\Omega,A_{z_0}^{(\nu)})\leq \mu_1(D^{\star},A_{x_0}^{(\nu)}),
$$
where $D^{\star}$ is the spherical cap in $\mathbb S^2_K$ centered at some $x_0\in\mathbb S^2_K$ with area $\abs{\Omega}$. Equality holds if and only if $\Omega=D^{\star}$ and $z_0=x_0$.
\end{thm}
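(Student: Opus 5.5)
Following the method of prescribed level lines, we use test functions constant on the level lines of the Green function $\psi_{z_0}$ of \eqref{green_boundary}, twisted by a phase. Near $z_0$ we have $\psi_{z_0}\sim-\frac1{2\pi}\log r$, $\psi_{z_0}>0$ in $\Omega$, and $\psi_{z_0}=0$ on $\partial\Omega$. Let $\theta$ be the harmonic conjugate angle, defined locally by $d\theta=-2\pi\star d\psi_{z_0}$; it is multivalued with period $2\pi$ around $z_0$. Then $A^{(\nu)}_{z_0}=\nu\,d\theta$, and by Theorem \ref{shi} we may gauge it. For $f$ a function of one real variable, take the test function $u=f(\psi_{z_0})$, which is real. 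Since $d\psi_{z_0}\perp\star d\psi_{z_0}$, we get
$$
|d^Au|^2=f'(\psi)^2|\nabla\psi|^2+\nu^2 f(\psi)^2\,4\pi^2|\nabla\psi|^2 .
$$
This test function is admissible for the Neumann problem whenever $f$ makes $u\in H^1_A$; near the pole one needs $f\to0$ at $\psi=+\infty$ when $\nu\notin\mathbb Z$. We reduce $\nu$ to $[0,1/2]$, although the phase trick already accounts for this. The point of the choice is that the coarea formula turns both quotient integrals into one-dimensional integrals in $t=\psi$. The gradient integral over $\{\psi=t\}$ is $\int|\nabla\psi|=1$ by the flux normalization. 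Writing $a(t)=|\{\psi>t\}|$, we have $-a'(t)=\int_{\{\psi=t\}}|\nabla\psi|^{-1}$. Hence
$$
R[u]=\frac{\int_0^\infty \left(f'(t)^2+4\pi^2\nu^2f(t)^2\right)dt}{\int_0^\infty f(t)^2\,(-a'(t))\,dt}.
$$
Thus $\mu_1(\Omega,A)$ is bounded by the first eigenvalue of a one-dimensional Sturm--Liouville problem with weight $-a'(t)$ and Neumann condition at $t=0$.

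The comparison step is to show that the weight of $\Omega$ dominates that of the model. Let $\psi^\star$ be the Green function of $D^\star$ with pole at $x_0$, and $a^\star(t)$ its distribution function, so that $a(0)=a^\star(0)=|\Omega|$. We aim for the inequality $a(t)\ge a^\star(t)$ for all $t$, or better a differential inequality. By Cauchy--Schwarz,
$$
L(t)^2\le \int_{\{\psi=t\}}|\nabla\psi|\int_{\{\psi=t\}}|\nabla\psi|^{-1}=-a'(t),
$$
where $L(t)$ is the length of the level line. The isoperimetric inequality on simply connected surfaces with $K\le K_0$ (Bol--Fiala/Alexandrov) gives $L^2\ge 4\pi a-K a^2$ for the simply connected superlevel sets, which are simply connected by the maximum principle. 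Therefore
$$
-a'(t)\ge 4\pi a(t)-Ka(t)^2,
$$
with equality for the cap. The model $a^\star$ solves the ODE with equality, and ODE comparison from the same initial value gives $a\ge a^\star$. This requires $4\pi a-Ka^2\ge0$, which is exactly where $|\Omega|\le|\mathbb S^2_K|$ is used.

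To conclude, take $f=f^\star$, the radial profile of the first eigenfunction of the cap written as a function of $\psi^\star$. It is known from the Appendix that this eigenfunction is real radial times $e^{i\cdot}$ in the gauge, so it is of this form. We need $\int f^2(-a')\ge\int f^2(-a^{\star\prime})$. Integrating by parts gives $\int 2ff'\,(a^\star-a)\,dt$ plus boundary terms, which is $\ge0$ if $(f^2)'\le0$. So we need $f^\star$ decreasing in $t$, that is, increasing in the distance $r$ from the pole; this must be verified for the magnetic ground state of caps via the ODE. The boundary term at $t=0$ vanishes because $a(0)=a^\star(0)$.

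The main obstacle is this monotonicity of the radial ground state profile on spherical caps of every size up to the whole sphere, together with the regularity issues at critical levels of $\psi$. For the whole sphere, the Neumann condition at $t=0$ must be replaced by behaviour at the second pole. Equality forces equality in Cauchy--Schwarz and in the isoperimetric inequality for a.e. $t$, hence $K\equiv K_0$ and $\Omega$ is a cap centered at $z_0$.
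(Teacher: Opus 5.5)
Your reduction to the $\psi$-variable is sound, and so is your geometric input. The differential inequality $-a'(t)\ge 4\pi a-Ka^2$ is exactly the paper's pointwise bound $G\ge G^{\star}$, written in $t$ instead of the area variable. There are two sign slips, and they cancel. First, comparison from $a(0)=a^{\star}(0)$ gives $a\le a^{\star}$, not $a\ge a^{\star}$: indeed $\frac{d}{dt}\int_{a(t)}^{M}\frac{ds}{4\pi s-Ks^2}\ge 1$, with equality for $a^{\star}$. Second, the integration by parts actually gives $\int_0^\infty f^2(-a')\,dt-\int_0^\infty f^2(-a^{\star\prime})\,dt=-\int_0^\infty (f^2)'(a^{\star}-a)\,dt$. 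So your transplantation works \emph{provided} $f^{\star}$ is decreasing in $t$.

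The genuine gap is that step: the radial ground state of a large cap is not increasing in $r$. Its profile $v(r)$ solves $(\sin r\,v')'=\sin r\,(\nu^2/\sin^2 r-\mu)\,v$ with $v>0$, $v'(R)=0$ and $\mu=\mu_1(D_R,A^{\star})$. Testing with $\sin^{\nu}r$ shows that $\mu_1(D_R,A^{\star})$ stays bounded as $R\to\pi$. Hence for $R$ close to $\pi$ one has $\nu^2/\sin^2R>\mu$. Then $\sin r\,v'$ is strictly increasing near $R$ and vanishes at $R$, which forces $v'<0$ near $\partial D_R$. Geometrically, the Neumann circle is then a small loop around $-x_0$ with non-integer flux, where $|A^{\star}|=\nu/\sin r$ blows up. Even on the whole sphere, $\sin^{\nu}r$ decreases on $(\pi/2,\pi)$.

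Monotonicity does hold for $R\le\pi/2$, since there $\nu^2/\sin^2 r-\mu$ changes sign only once. So your argument reaches roughly hemispheres, which is essentially the area restriction of Theorem \ref{cps}, b) that this theorem is meant to remove. Beyond that, $(f^{\star})^2$ increases in $t$ near $t=0$, and the comparison $a\le a^{\star}$ no longer controls $\int (f^{\star})^2(-a')\,dt$.

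The paper needs no monotonicity of eigenfunctions. In the area variable the quotient is $\int_0^M(Gf'^2+4\pi^2\nu^2f^2/G)\,da/\int_0^Mf^2\,da$. The paper shows that the first Sturm--Liouville eigenvalue is decreasing in the weight, using Feynman--Hellmann along $G_s=(1-s)G^{\star}+sG$. The derivative is $\int_0^M(G_s^2f_s'^2-4\pi^2\nu^2f_s^2)(G-G^{\star})G_s^{-2}\,da$. A Riccati argument for $Q=G_sf_s'/f_s$, which satisfies $Q'=(4\pi^2\nu^2-Q^2)/G_s-\kappa_1(G_s)$ and $Q(M)=0$, with $G_s\sim 4\pi a$ at $0$, then gives $|Q|<2\pi\nu$. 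This bound holds whatever the sign of $f_s'$, so it covers all areas up to $|\mathbb S^2_K|$.
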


The following is an immediate consequence:
\begin{cor}\label{cor_boundary_spherical} Let $\Omega$ be a simply connected domain in $\mathbb S^2$, and choose any $z_0\in\Omega$. Then, for any flux $\nu\in\mathbb R$ we have:
$$
\mu_1(\Omega,A_{z_0}^{(\nu)})\leq \mu_1(D^{\star},A_{x_0}^{(\nu)}),
$$
where $D^{\star}$ is the spherical cap in $\mathbb S^2$ centered at some $x_0\in\mathbb S^2$ with area $\abs{\Omega}$. Equality holds if and only if $\Omega=D^{\star}$ and $z_0=x_0$.
\end{cor}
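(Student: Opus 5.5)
The corollary should follow by specializing Theorem \ref{main_boundary} to $K=1$. The plan is to check that a simply connected domain $\Omega\subset\mathbb S^2$ with smooth boundary, equipped with the round metric, belongs to $\mathcal M_1$, and that it satisfies the area hypothesis. The curvature condition is immediate: the gaussian curvature of the round metric is identically $1$, hence bounded above by $K=1$. Compactness and simple connectedness of $\overline\Omega$ are part of the hypotheses. The area condition $\abs{\Omega}\le\abs{\mathbb S^2_1}=4\pi$ is automatic, since $\Omega\subset\mathbb S^2$. So there is no restriction on the area, which is the point of the corollary compared with Theorem \ref{cps}, b).

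Next I would make sure the potential in the corollary is the one in Theorem \ref{main_boundary}. For a domain $\Omega\subset\mathbb S^2$ with pole $z_0$, the potential is defined as $A^{(\nu)}_{z_0}=-2\pi\nu\star d\psi_{z_0}$, with $\psi_{z_0}$ the Dirichlet Green function of $\Omega$. This is exactly the potential of the theorem.

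For the model, $D^\star$ is the spherical cap in $\mathbb S^2$ of area $\abs{\Omega}$ centered at $x_0$. There the Green function is radial, so $\star d\psi_{x_0}$ is a multiple of $d\theta$ and $A^{(\nu)}_{x_0}=\nu\,d\theta$. Any other Aharonov-Bohm potential with the same pole and flux differs from this one by an exact form. By Theorem \ref{shi}, a) the spectrum is therefore unchanged, so the right-hand side is unambiguous.

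Applying Theorem \ref{main_boundary} gives the inequality, and the equality statement (equality if and only if $\Omega=D^\star$ and $z_0=x_0$) transfers verbatim. The only potential obstacles are two borderline cases, which I expect to be mild.

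\textbf{Area $4\pi$.} A proper smooth domain has $\abs{\Omega}<4\pi$. The case $\abs{\Omega}=4\pi$ is covered by the theorem's convention that the model is then the whole sphere.

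\textbf{Regularity.} If "domain" allows non-smooth boundary, one would need an approximation argument. I would instead assume smooth boundary, consistent with the definition of $\mathcal M_K$.
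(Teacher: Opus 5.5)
Your proposal is correct and is exactly the paper's argument: the corollary is presented as an immediate consequence of Theorem \ref{main_boundary} with $K=1$, since the round metric has curvature $\equiv 1$ and $|\Omega|\le 4\pi$ holds automatically, so no area restriction is needed. One caveat comes from the statement itself rather than from your reasoning: for $\nu\in\mathbb Z$ both sides vanish for every $\Omega$ and every $z_0$ (by Theorem \ref{shi}), so the equality characterization is only valid for $\nu\notin\mathbb Z$, and ``transfers verbatim'' holds only in that range.
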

This inequality was first proved with the restriction $|\Omega|\leq 2\pi$ in \cite{CPS22}.

When the domain $\Omega\in \mathcal M_K$ has area greater than that of $\mathbb S^2_K$ the model domain will be the whole sphere, in the following sense.

\begin{thm}\label{main_closed_2} Let $\Omega\in {\mathcal M}_K$, let $z_0\in\Sigma$ and let $A_{z_0}^{(\nu)}$ be an Aharonov-Bohm potential with  flux $\nu\in(0,\frac 12]$ and pole $z_0$. Assume that $\abs{\Omega}\geq \abs{\mathbb S^2_K}$. Then we have:
\begin{equation}\label{ineq_main_closed_2}
\begin{aligned}
\mu_1(\Sigma,A_{z_0}^{(\nu)})&\leq K\nu(\nu+1)\\
&=\mu_1(\mathbb S^2_K,A_{x_0}^{(\nu)})\\
\end{aligned}
\end{equation}
where $A_{x_0}^{(\nu)}$ is the Aharonov-Bohm potential on ${\mathbb S}^2_K$ with antipodal poles $x_0$, $-x_0$. Equality holds if and only if $\Omega=\mathbb S^2_K\setminus\{-x_0\}$ with pole $x_0$. If $\nu\in\mathbb R$, the inequality holds replacing $\nu$ by $\min_{n\in\mathbb Z}\abs{\nu-n}$. 
\end{thm}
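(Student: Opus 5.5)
The plan is the method of prescribed level lines applied to the Green function $\psi=\psi_{z_0}$ of \eqref{green_boundary}. We work in the gauge $A=A^{(\nu)}_{z_0}=-2\pi\nu\star d\psi$, and first reduce to $\nu\in(0,\frac12]$ using gauge invariance (Theorem \ref{shi}) together with complex conjugation, which sends $\nu$ to $-\nu$. Since $\Omega$ is simply connected, $\psi$ is positive in $\Omega$ with a single logarithmic pole at $z_0$. Moreover $\psi$ has no interior critical points: by conformal uniformization it is $-\frac1{2\pi}\log|w|$ in a conformal disk coordinate $w$ centered at $z_0$. Its level lines $\{\psi=t\}$ are therefore smooth closed curves around $z_0$. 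Let $V(t)=|\{\psi>t\}|$. The co-area formula gives $\int_{\{\psi=t\}}|\nabla\psi|=1$.

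The test function will be $u=f(\psi)$ with $f$ real. Since $\langle A,d\psi\rangle=0$, we get $|d^Au|^2=f'(\psi)^2|\nabla\psi|^2+f(\psi)^2|A|^2$ with $|A|=2\pi\nu|\nabla\psi|$. The Rayleigh quotient then becomes
$$
\frac{\int_0^\infty \big(f'(t)^2+4\pi^2\nu^2 f(t)^2\big)\,dt}{\int_0^\infty f(t)^2(-V'(t))\,dt},
$$
because $\int_{\{\psi=t\}}|\nabla\psi|=1$ and $-V'(t)=\int_{\{\psi=t\}}|\nabla\psi|^{-1}$. Only the denominator depends on the geometry, through the distribution $V(t)$. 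We will transplant $f$ from the model problem. On $\mathbb S^2_K$ with poles $x_0,-x_0$, the function $\psi^\star$, defined as the Green function of a cap centered at $x_0$, exhausts the sphere as the cap grows. For the whole sphere we use $\psi^\star=-\frac1{2\pi}\log\tan(\sqrt K r/2)$, for which $A=\nu d\theta$ and which takes values in $(-\infty,\infty)$. The first eigenfunction of $\mathbb S^2_K\setminus\{-x_0\}$ is $u^\star=\sin^\nu(\sqrt K r)\cdots$, or more precisely the radial ground state $\tan^\nu(\sqrt K r/2)\cos^{2\nu}(\ldots)$ type. Whatever its exact form, it is radial, hence of the form $f^\star(\psi^\star)$, with eigenvalue $K\nu(\nu+1)$ (Appendix \ref{appendix}).

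The key comparison is between $V_\Omega(t)$ and the model $V^\star(t)$ after matching normalizations. We use the isoperimetric inequality for $K\le K$ surfaces (Bol–Fiala/Alexandrov: $L^2\ge 4\pi V-KV^2$) along the level lines $\{\psi=t\}$. Combined with Cauchy–Schwarz, $1=\big(\int|\nabla\psi|\big)$ and $L(t)^2\le \int|\nabla\psi|\int|\nabla\psi|^{-1}=-V'(t)$, this gives the differential inequality $-V'\ge 4\pi V-KV^2$. Equality holds for the model, where $-V^{\star\prime}=4\pi V^\star-K V^{\star2}$ exactly. Near $t=+\infty$ both $V$ and $V^\star$ behave like $Ce^{-4\pi t}$, with constants given by conformal radii. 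After shifting $t$ we may assume $V\le V^\star$ for large $t$, and ODE comparison then yields $V(t)\le V^\star(t)$ wherever both are defined. In our case $V(0)=|\Omega|\ge|\mathbb S^2_K|$, while $V^\star\to|\mathbb S^2_K|$ only as $t\to-\infty$. Hence the comparison forces $V$ to reach the value $|\mathbb S^2_K|$ at some finite or infinite shifted time, which is exactly the regime for the inequality to hold. The precise statement we aim for is $V(t)\ge V^\star(t)$ for $t\ge 0$ after an appropriate shift, with the direction chosen so that the denominator for $\Omega$ dominates the model's.

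This comparison is the main obstacle: we must check the direction of the ODE comparison and choose the shift correctly. We will also need $f^\star$ to be monotone, so that a layer-cake argument, $\int f^2(-V')=\int (f^2)'(-)V$ after integrating by parts, converts $V\ge V^\star$ into a comparison of denominators. The model ground state $f^\star(t)$ increases in $t$, that is, toward the pole. Taking $f=f^\star$ restricted to $t\ge0$, the numerator for $\Omega$ is at most the full model numerator. Integrating the denominator by parts gives $\int f^2(-V')=f(0)^2V(0)+\int_0^\infty (f^2)'V$, and this is at least the corresponding model quantity, using $V(0)=|\Omega|\ge|\mathbb S^2_K|\ge$ the model's tail mass. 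This gives $\mu_1\le K\nu(\nu+1)$. For equality, every inequality must be an equality: the Bol isoperimetric inequality and Cauchy–Schwarz must be sharp on all level lines, so the curvature is $K$, the level sets are geodesic disks, and $\Omega$ is the punctured sphere.
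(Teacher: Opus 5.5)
\textbf{There is a genuine gap.} Your reduction to the quotient in the variable $t=\psi$ is correct, and so is the differential inequality $-V'\ge V(4\pi-V)$ (with $K=1$). After matching the asymptotics $V\sim Ce^{-4\pi t}$ it gives $V(t)\ge V^\star(t-s)$. This is exactly the paper's weight comparison $G\ge G^\star$ on $(0,4\pi)$ written in the area variable $a=V(t)$, since $G(a)=-V'(t)$.

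The argument breaks at the next step. The model ground state is $u^\star=\sin^\nu(\sqrt K r)$ (Proposition \ref{prop:specsphere}). With $t=\psi^\star=-\frac1{2\pi}\log\tan(\sqrt K r/2)$ one has $\sin(\sqrt K r)=1/\cosh(2\pi t)$, so $f^\star(t)=(\cosh 2\pi t)^{-\nu}$. This function increases only up to the level of the model's equator, then decreases to $0$ at the pole. Your claim that $f^\star$ increases toward the pole is therefore false.

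Consequently, in $\int_0^\infty f^2(-V')\,dt=f(0)^2V(0)+\int_0^\infty (f^2)'V\,dt$, the whole region between the equator level and the pole has $(f^2)'<0$. There the inequality $V\ge V^\star$ pushes the denominator the wrong way. Choosing the shift or the test function differently within this scheme does not help. The term $4\pi^2\nu^2\int f^2\,dt$ in your numerator forces $f\to 0$ as $t\to+\infty$, so no admissible $f$ is monotone toward the pole. The tail comparison $V\ge V^\star$ alone (a layer-cake argument) therefore cannot give the denominator inequality.

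The missing idea is the paper's monotonicity lemma for the one-dimensional problem (Lemma \ref{lem:optimsturmmliouville}). In the area variable the quotient becomes $\int_0^M\bigl(Gf'^2+4\pi^2\nu^2G^{-1}f^2\bigr)\,da\big/\int_0^M f^2\,da$. The lemma states that the first eigenvalue of $-(Gf')'+4\pi^2\nu^2G^{-1}f$ is decreasing in the weight $G$.

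Increasing $G$ raises the kinetic term and lowers the potential term, and one must show the latter wins. By Feynman--Hellmann the derivative along $G_t=(1-t)G_1+tG_2$ is $\int (G_t^2f_t'^2-4\pi^2\nu^2f_t^2)(G_2-G_1)G_t^{-2}\,da$. A Riccati argument for $R=G_tf_t'/f_t$ then shows $|R|<2\pi\nu$. In your variable this reads $|(\log f)'|<2\pi\nu$; for the model, $(\log f^\star)'=-2\pi\nu\tanh 2\pi t$. This is a statement about eigenvalues, not about a single transplanted test function.

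For $\abs{\Omega}\ge\abs{\mathbb S^2_K}$ the paper does the following:
\begin{enumerate}
\item It compares $G$ with the truncated weight $G^\star_\eps$, equal to $G^\star$ on $(0,a_\eps)$ and to $\eps$ on $(a_\eps,M)$.
\item It bounds the resulting eigenvalue by the Dirichlet eigenvalue of a cap of area $a_\eps$.
\item It lets $\eps\to0$ using Lemma \ref{lem_app_conv}.
\end{enumerate}
This replaces your boundary-term bookkeeping with $V(0)\ge\abs{\mathbb S^2_K}$. Your equality discussion rests on the same failed comparison, so it inherits the gap.
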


\subsection{Upper bounds for simply connected surfaces} For a constant $K>0$, we let
$\overline{\mathcal M}_K$ denote the family of closed simply connected Riemannian surfaces with gaussian curvature bounded above by $K$. The model surface is here 
$\mathbb S^2_K$ with antipodal poles. Here is the relevant result. 

\begin{thm}\label{main_closed} Let $\Sigma\in \overline{\mathcal M}_K$, let $z_1, z_2\in\Sigma$ and let $A_{z_1,z_2}^{(\nu)}$ be an Aharonov-Bohm potential with flux $\nu$ and poles $z_1,z_2$. Then, for any flux $\nu\in (0,\frac 12]$ we have:
\begin{equation}\label{ineq_main_closed}
\begin{aligned}
\mu_1(\Sigma,A_{z_1,z_2}^{(\nu)})&\leq K\nu(\nu+1)\\
&=\mu_1(\mathbb S^2_K,A_{x_0}^{(\nu)})\\
\end{aligned}
\end{equation}
where $A_{x_0}^{(\nu)}$ is the Aharonov-Bohm potential on ${\mathbb S}^2_K$ with antipodal poles $x_0,-x_0$. Equality holds if and only if $\Sigma=\mathbb S^2_K$ and $z_1, z_2$ are antipodal. If $\nu\in\mathbb R$, the inequality holds replacing $\nu$ by $\min_{n\in\mathbb Z}\abs{\nu-n}$. 
\end{thm}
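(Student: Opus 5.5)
We will use the method of prescribed level lines, with the Green function $\psi=\psi_{z_1,z_2}$ supplying the level sets. First we verify the model value. On $\mathbb S^2_K$ with antipodal poles the potential is $\nu\,d\theta$, and the ground state is $u=(\sin(\sqrt K r/2))^{\nu}(\cos(\sqrt K r/2))^{\nu}e^{i\theta}$ up to the appropriate profile. The Rayleigh quotient of $f(r)e^{i\theta}$ reduces to a one-dimensional problem with potential $(1-\nu)^2/\sin^2$, or equivalently $\nu^2/\sin^2$ after gauging by $e^{-i\theta}$. This gives the eigenvalue $K\nu(\nu+1)$ for $\nu\in(0,\tfrac12]$, and the appendix computes it explicitly.

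\textbf{Test functions.} On $\Sigma$, set $t=2\pi\psi$, which ranges over $\mathbb R$ and is harmonic away from the poles. The form $\star dt$ is closed on $\Sigma\setminus\{z_1,z_2\}$ with period $2\pi$ around $z_1$. Hence locally $\star dt=d\theta$ for a multivalued harmonic conjugate $\theta$, and $A^{(\nu)}_{z_1,z_2}=-\nu\,d\theta$ up to sign. Since $\Sigma$ is simply connected and closed, the map $w=t+i\theta$ gives a conformal identification of $\Sigma\setminus\{z_1,z_2\}$ with the flat cylinder $\mathbb R\times\mathbb S^1$. We take test functions $u=f(t)e^{-i\theta}$, using $\nu\in(0,\tfrac12]$ to choose the winding that makes $|\nu-1|$ or $|\nu|$ the effective flux, whichever gives the better bound. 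Writing $g=e^{2\phi}|dw|^2$, we have $|d^Au|^2dv=(f'^2+(1-\nu)^2f^2)\,dt\,d\theta$ by conformal invariance of the Dirichlet energy. The denominator is $\int f^2 e^{2\phi}\,dt\,d\theta=\int f(t)^2 a(t)\,dt$, where $a(t)=\int_{\{t\}\times\mathbb S^1}e^{2\phi}\,d\theta=-V'(t)$ and $V(t)$ is the area of $\{\psi>t/2\pi\}$. So
$$\mu_1\le\frac{\int (f'^2+c^2f^2)\,dt}{\int f^2 a\,dt},$$
where $a\,dt$ is the pushforward of the area measure.

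\textbf{Comparison of weights.} The curvature bound enters as follows. The Gauss curvature is $K_g=-e^{-2\phi}\Delta_0\phi\le K$. By Jensen's inequality on each circle, the function $a$ satisfies a differential inequality: $h=\log a$ obeys $h''\ge -2K e^{h}/(2\pi)$ roughly, that is, $(\log a)''+\frac{K}{\pi}a\ge0$. This is a Bol/Alexandrov-type inequality. For the model sphere, with a suitable shift in $t$, the corresponding weight $a_0(t)=2\pi/(K\cosh^2 t)$ satisfies equality, has total mass $4\pi/K$, and is centred symmetrically. We then choose $f$ to be the model profile $f_0$, which in cylinder coordinates behaves like $(\cosh t)^{-\nu}$ times a suitable factor. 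Equivalently, we transplant the model eigenfunction along the map matching $t$-levels.

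The key step, and the main obstacle, is to show $\int f_0^2\,a\,dt\ge\int f_0^2\,a_0\,dt$ after optimizing over the translation parameter $t\mapsto t+s$. The total masses differ, since $|\Sigma|$ is arbitrary. We plan to exploit that $\log a$ is ``less concave'' than $\log a_0$. A comparison/Sturm argument shows that $a$ and the translate of $a_0$ that crosses it correctly intersect in a controlled way, and that $a\ge a_0$ on the central region where $f_0^2$ is concentrated. Because $f_0^2$ should be decreasing in $|t|$, a rearrangement-type lemma would then conclude. Alternatively, we can choose $s$ so that $a-a_0$ changes sign at two points symmetric relative to $f_0^2$.

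Equality forces $a\equiv a_0$ and equality in Jensen's inequality. Then $\phi$ depends only on $t$ and $K_g\equiv K$, which gives the round sphere with antipodal poles. For general $\nu$ we reduce to $\min_n|\nu-n|$ by Theorem \ref{shi} a) together with complex conjugation ($\nu\mapsto-\nu$).
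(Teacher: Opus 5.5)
\textbf{There is a gap, in the ``comparison of weights'' step, which is where all the work lies.} The inequality $(\log a)''+\frac{K}{\pi}a\ge 0$ does not follow from Jensen. With $d\mu=e^{2\phi}d\theta/a$ one has $(\log a)''=\int 2\phi_{tt}\,d\mu+\operatorname{Var}_\mu(2\phi_t)$. The curvature bound only gives $(\log a)''\ge -\frac{2K}{a}\int e^{4\phi}d\theta$, and since $\int e^{4\phi}d\theta\ge a^2/2\pi$, this is weaker than your claim.

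In fact the inequality is false, even on the round sphere with non-antipodal poles. Take $K=1$, stereographic coordinate $z$, poles $z_1=0$, $z_2=1$, and $w=\log\frac{z}{1-z}$. Then
\[
a(t)=\frac{8\pi x(1+2x)}{(1+4x^2)^{3/2}},\qquad x=e^{2t},
\]
and at $x=\frac12$ one finds $(\log a)''=-5$ while $a/\pi=2\sqrt2$.

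Separately, the estimate you actually need, $\int f_0^2a\,dt\ge\int f_0^2a_0\,dt$ for some translate, is only announced (``we plan'', ``would then conclude''). So neither the bound nor the equality case is established. A minor point: for $\nu\in(0,\frac12]$ the right test functions are real (winding $0$), giving the constant $\nu^2$, not $(1-\nu)^2$. The model ground state is $(\cosh t)^{-\nu}=\sin^\nu r$ for $K=1$, with no factor $e^{i\theta}$.

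\textbf{How to repair it.} What is needed is a first-order inequality, which is exactly the paper's Lemma \ref{lem:isoper}.
\begin{enumerate}
\item[(1)] Put $W(t)=|\{2\pi\psi<t\}|$, so $W'=a$.
\item[(2)] Cauchy--Schwarz on the level circle gives $L(t)^2\le 2\pi a(t)$.
\item[(3)] The isoperimetric inequality \eqref{geo_iso}, rescaled to curvature $\le K$ and applied to the disk $\{2\pi\psi<t\}$, gives $L^2\ge W(4\pi-KW)$.
\item[(4)] Hence $\frac{d}{dt}\log\frac{W}{4\pi/K-W}\ge 2$ wherever $W<4\pi/K$, with equality for the model $W_0=\frac{2\pi}{K}(1+\tanh t)$.
\item[(5)] Use the free additive constant in $\psi$ to impose $W(0)=2\pi/K$. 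This gives $t\,(W-W_0)\ge 0$ on $\mathbb R$.
\item[(6)] Integrating by parts against $f_0^2=(\cosh t)^{-2\nu}$,
\[
\int_{\mathbb R} f_0^2\,a\,dt-\int_{\mathbb R} f_0^2\,a_0\,dt=2\nu\int_{\mathbb R}\frac{\tanh t}{\cosh^{2\nu}t}\,(W-W_0)\,dt\ge 0.
\]
Equality holds only if $W\equiv W_0$, that is, with equality in both (2) and (3) at every level.
\end{enumerate}
This repairs your argument and handles the equality case.

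\textbf{Comparison with the paper.} The paper uses the same first-order inequality, but in the area variable: $G(a)\ge a(4\pi-a)$. There the competing weights live on different intervals, $(0,M)$ and $(0,4\pi)$. That forces the paper to prove the monotonicity Lemma \ref{lem:optimsturmmliouville_2} (Feynman--Hellmann plus a Riccati argument), introduce the truncated weights $G^\star_\eps$, and pass to the limit via Lemma \ref{lem_app_conv}. In your $t$-variable the numerator $2\pi\int(f'^2+\nu^2f^2)\,dt$ does not depend on $\Sigma$. So only the denominators need comparing, and the single test function $f_0$ suffices.
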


As an immediate consequence, we take $\Sigma=\mathbb S^2$ and observe the following corollary on the optimal placement of two poles on a round sphere. 

\begin{cor}\label{cor_2poles} The following inequality holds for any pair of poles $x_0,y\in\mathbb S^2$ and any flux $\nu\in (0,\frac 12]$:
$$
\begin{aligned}
\mu_1(\mathbb S^2, A^{(\nu)}_{x_0,y})&\leq \nu(\nu+1)\\
&=\mu_1(\mathbb S^2, A^{(\nu)}_{x_0})
\end{aligned}
$$
where $A_{x_0}^{(\nu)}$ is the Aharonov-Bohm potential on ${\mathbb S}^2_K$ with antipodal poles $x_0,-x_0$. Equality holds if and only if  $y=-x_0$.
\end{cor}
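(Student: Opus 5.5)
Corollary \ref{cor_2poles} follows from Theorem \ref{main_closed} applied with $\Sigma=\mathbb S^2$, $K=1$ and poles $z_1=x_0$, $z_2=y$. The round sphere is closed, simply connected and has constant curvature $1$, so $\mathbb S^2\in\overline{\mathcal M}_1$. For $\nu\in(0,\frac12]$, inequality \eqref{ineq_main_closed} then gives
$$
\mu_1(\mathbb S^2,A^{(\nu)}_{x_0,y})\leq \nu(\nu+1).
$$

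To identify the right-hand side, take $y=-x_0$. The Green function $\psi_{x_0,-x_0}$ is radial about $x_0$, with $\psi'(r)=-\frac{1}{2\pi\sin r}$. Hence $-2\pi\nu\star d\psi$ is a multiple of $d\theta$, and since the flux around any parallel is $\nu$, it equals $\nu\,d\theta$. So $A^{(\nu)}_{x_0,-x_0}$ coincides with the antipodal potential $A^{(\nu)}_{x_0}$. The equality clause of Theorem \ref{main_closed} (equivalently, the explicit computation of the spectrum in Appendix \ref{appendix}) shows that $\mu_1(\mathbb S^2,A^{(\nu)}_{x_0})=\nu(\nu+1)$, which is the stated identity.

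For the equality case, Theorem \ref{main_closed} states that equality holds if and only if $\Sigma=\mathbb S^2_K$ and the poles are antipodal. Since $\Sigma=\mathbb S^2$ is fixed, equality holds exactly when $y=-x_0$. All the content lies in Theorem \ref{main_closed}, so nothing here is a real obstacle; the one check needed is the identification of $A^{(\nu)}_{x_0,-x_0}$ with $\nu\,d\theta$, which only matters up to gauge anyway, since by Theorem \ref{shi} two closed potentials with the same fluxes have the same spectrum.
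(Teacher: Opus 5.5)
Your proposal is correct and follows the paper's route: the paper obtains Corollary \ref{cor_2poles} as an immediate specialization of Theorem \ref{main_closed} to $\Sigma=\mathbb S^2$, $K=1$. The value $\nu(\nu+1)$ comes from Proposition \ref{prop:specsphere}, and the equality case comes from the rigidity part of Theorem \ref{main_closed}, which the paper itself settles on the sphere via $G=G^{\star}$ and Lemma \ref{lem:isoper}. Your explicit verification that $-2\pi\nu\star d\psi_{x_0,-x_0}=\nu\,d\theta$ is a detail the paper only asserts.
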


\subsection{Riemannian annuli}

 Finally, we briefly consider the case of a Riemannian annulus $\mathcal C$, namely, a compact surface diffeomorphic to $\mathbb S^1\times[0,1]$ endowed with a Riemannian metric. Let $A^{(\nu)}$ be a smooth closed $1$-form on $\Sigma$ with flux $\nu\in\mathbb R$ around any of the boundary circles. For example, in cylindrical coordinates $(\theta,z)$, one can take $A^{(\nu)}=\nu d\theta$. We consider the magnetic Laplacian $\Delta_{A^{(\nu)}}$. Then the Neumann spectrum is discrete, and the first eigenvalue is positive as long as $\nu\notin\mathbb Z$ and one has by the min-max principle
 $$
\mu_1(\mathcal C,A^{(\nu)})=\min_{0\ne u\in H^1(\mathcal C)}\frac{\int_{\mathcal C}|d^Au|^2}{\int_{\mathcal C}|u|^2}.
 $$
Since any two closed $1$-forms with the same flux on $\mathcal C$ differ by an exact $1$-form, we see by gauge invariance that the magnetic spectrum depends only on $\nu$. Hence we  set
$$
\mu_1(\mathcal C,\nu)\doteq\mu_1(\mathcal C,A^{(\nu)}).
$$
Now, any Riemannian annulus $\mathcal C$ is conformally equivalent to the  flat cylinder $\mathcal C_M\doteq \mathbb S^1\times[-M,M]$ for a unique $M>0$; the number $M$ is the so-called {\it conformal modulus} of $\mathcal C$. We refer e.g., to \cite{PSannuli} where the above study is carried out for the magnetic Steklov problem. Here is the relevant result:
 \begin{thm}\label{main_annulus}

$$
|\mathcal C|\mu_1(\mathcal C,\nu)\leq4\pi M\min_{n\in\mathbb Z}|\nu-n|^2
$$
where $M$ it the conformal modulus of $\mathcal C$. Equality holds if and only if $\mathcal C$ is homothetic to $\mathcal C_M$. 
 \end{thm}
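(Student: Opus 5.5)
Since both sides of the inequality are invariant under homotheties, we use the conformal equivalence $\Phi:\mathcal C_M\to\mathcal C$, under which the metric of $\mathcal C$ pulls back to $\rho\,(d\theta^2+dz^2)$ on $\mathcal C_M=\mathbb S^1\times[-M,M]$, with $\rho>0$ smooth. By gauge invariance (Theorem \ref{shi}) we may take $A^{(\nu)}=\nu\,d\theta$, and we may replace $\nu$ by $\nu'=\min_{n\in\mathbb Z}|\nu-n|$. The plan rests on two facts. In dimension two the Dirichlet energy $\int|d^Au|^2$ is conformally invariant: $|d^Au|^2_g\,dv_g=|d^Au|^2_{flat}\,d\theta\,dz$. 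The area element, by contrast, is $\rho\,d\theta\,dz$. So we should pick a test function whose modulus is constant, for then the $L^2$ norm only sees the total area.

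The test function is $u=e^{in\theta}$, with $n\in\mathbb Z$ the integer closest to $\nu$. It is smooth on $\mathcal C_M$, so it lies in $H^1(\mathcal C)$. Then $d^Au=i(n-\nu)e^{in\theta}d\theta$, and $|d^Au|^2_{flat}=(n-\nu)^2$. Its energy is therefore $(\nu-n)^2\cdot 2\pi\cdot 2M=4\pi M\nu'^2$ in either metric, while $\int_{\mathcal C}|u|^2=|\mathcal C|$. The min-max characterization then gives
$$
\mu_1(\mathcal C,\nu)\le \frac{4\pi M\nu'^2}{|\mathcal C|},
$$
which is the inequality.

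For the equality case we may assume $\nu\notin\mathbb Z$; if $\nu\in\mathbb Z$, both sides vanish and equality is trivial, so the characterization in that case must exclude integer flux. If equality holds, then $u$ minimizes the Rayleigh quotient and hence is an eigenfunction. In the flat coordinates the eigenvalue equation becomes $\Delta^{flat}_A u=\rho\,\mu_1 u$. For our $u$ the left side is $(n-\nu)^2u$, and the Neumann conditions hold automatically, since $\partial_z u=0$ and $A$ has no $dz$ component. This forces $\rho\,\mu_1=(n-\nu)^2$, so $\rho$ is constant and $\mathcal C$ is homothetic to $\mathcal C_M$.

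Conversely, for the flat cylinder we check that $e^{in\theta}$ is the ground state. Separating variables, the eigenvalues are $(k-\nu)^2+(\pi j/(2M))^2$ with $k\in\mathbb Z$ and $j\ge0$, and the smallest is $\nu'^2$, attained at $k=n$, $j=0$. This confirms equality, with $|\mathcal C_M|=4\pi M$.

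The main obstacle is making the conformal step rigorous: one needs the existence and uniqueness of the modulus $M$ (uniformization for annuli), and care at the boundary, where the Neumann condition must be shown compatible with the equality argument. Both are standard, so the argument should go through directly.
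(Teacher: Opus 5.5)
Your proof is correct and is essentially the paper's argument: conformal invariance of the magnetic Dirichlet energy in dimension two, combined with a test function of constant modulus. Your $e^{in\theta}$ is simply the gauge-transformed form of the paper's choice $u_1=1$ after reducing to $\nu\in(0,\frac12]$, and your equality argument ($\rho\,\mu_1=(n-\nu)^2$ forces $\rho$ to be constant) matches the paper's observation that $|\Phi^{\star}A|$ must be constant; your remark that the rigidity statement requires $\nu\notin\mathbb Z$ is a correct caveat that the paper leaves implicit.
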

Note that the upper bound depends on the minimum distance of $\nu$ from the set of integer numbers. In particular, we can take $\nu\in(0,\frac 12]$ so that the upper bound reads
$$
|\mathcal C|\mu_1(\mathcal C,\nu)\leq4\pi M\nu^2.
$$
With the same method as in \cite{CPS_conformal}, we have that the inequality is sharp asymptotically as $\nu\to 0$, because:
$$
\lim_{\nu\to 0}\frac{|\mathcal C|\mu_1(\mathcal C,\nu)}{\nu^2}=4\pi M.
$$
In other words, knowing the normalised eigenvalue $|\mathcal C|\mu_1(\mathcal C,\frac 1n)$ for all $n\in\mathbb N$, determines the conformal class of $\mathcal C$. In \cite{CPS_conformal} this phenomenon is studied for closed, orientable surfaces of arbitrary genus $g$: there it is proved that the knowledge of a suitable countable collection of magnetic ground states determines the volume and the conformal class of the surface.

\section{Proof of Theorem \ref{main_boundary}}\label{sec_main_boundary}

The proof relies on the so-called {\it prescribed level sets method}, which consists in using test functions for the Rayleigh quotient that are constant on the level lines of the Green function of $\Omega$ with pole $z_0$: $\psi_{z_0}$. The problem is then reduced to the study of the monotonicity of the first eigenvalue of a one-dimensional eigenvalue problem. The strategy is similar to that employed in \cite{CLPS_reverse}. We remark that from the scaling properties of the eigenvalues, it is sufficient to prove Theorem \ref{main_boundary} for $\Omega\in\mathcal M_1$, that is, when the gaussian curvature is bounded above by $1$. Also, by gauge invariance, it is not restrictive to assume $\nu\in\left(0,\frac{1}{2}\right]$. Through all the proof we will also set 
$$
M\doteq |\Omega|=|D^{\star}|.
$$
\subsection{Reduction to a one dimensional problem}\label{subsect:optimpole}
  Let $\nu\in(0,\frac{1}{2}]$ and let $z_0\in\Omega$ be fixed. Let $\psi_{z_0}$ be the Green function in $\Omega$ with pole $z_0$ defined in \eqref{green_boundary}. Recall that
$$
A_{z_0}^{(\nu)}=-2\pi\nu\star d\psi_{z_0}.
$$ 
Let $D^{\star}$ be a spherical cap in $\mathbb S^2$ with $|D^{\star}|=|\Omega|=M<4\pi$, and let $x_0$ be its center. By $\psi_{x_0}$ we denote the Green function of $D^{\star}$ with pole $x_0$ and by $A_{x_0}^{(\nu)}$ the potential in $D^{\star}$ defined by $A_{x_0}^{(\nu)}=-2\pi\nu\star d\psi_{x_0}$. We have $A^{(\nu)}_{x_0}=\nu d\theta$. Since  $\nu$, $\Omega, D^{\star}$, $z_0$ and $x_0$ are fixed, to simplify the notation we shall write, through all the proof
$$
A\doteq A_{z_0}^{(\nu)}\,,\ \ \ A^{\star}\doteq A_{x_0}^{(\nu)}\,,\ \ \ \psi\doteq \psi_{z_0}\,,\ \ \ \psi^{\star}\doteq \psi_{x_0}.
$$
It is well-known that $\psi,\psi^{\star}$ have no critical points. Consider now the function space
$$
\mathcal R(\Omega)\doteq\{u:u=g\circ\psi\,,\ g\in H^1(0,\infty)\}\subset H^1_{A}.
$$
The space $\mathcal R(\Omega)$ consists of all functions in $H^1_{A}(\Omega)$ that are constant on the level lines of $\psi$. We set
\begin{equation}
\kappa_1(\Omega,A)\doteq\min_{0\ne u\in\mathcal R(\Omega)}\frac{
\int_{\Omega}|d^Au|^2}{\int_{\Omega}|u|^2}.
\end{equation}
From the min-max principle we have
\begin{equation}\label{ineq1}
\mu_1(\Omega,A)\leq\kappa_1(\Omega,A).
\end{equation}
Now, if $D^{\star}$ is a disk in $\mathbb S^2$ of radius $R$ and the pole of the magnetic potential coincides with its center, then the first eigenfunction is real and radial (see e.g., \cite[Appendix B.1]{CPS22}), hence
\begin{equation}\label{eq1}
\mu_1(D^{\star},A^{\star})=\kappa_1(D^{\star},A^{\star}).
\end{equation}
In order to complete the proof of Theorem \ref{main_boundary}, it is enough to prove
\begin{equation}
\kappa_1(\Omega,A)\leq\kappa_1(D^{\star},A^{\star})
\end{equation}
and show that the equality holds only if $\Omega=D^\star$ and $A=A^\star$.

\medskip

To do so, we characterize $\kappa_1(\Omega,A)$ as the minimizer of a $1$-dimensional Rayleigh quotient associated with a Sturm-Liouville problem. Define a function $G_{z_0}:(0,M)\to(0,\infty)$ by
\begin{equation}\label{Gz0}
G_{z_0}(a)\doteq\int_{\psi_{z_0}=\beta_{z_0}(a)}\frac{1}{|d\psi_{z_0}|},
\end{equation}
where $\beta_{z_0}(a)$ is defined by $|\{\psi_{z_0}>\beta_{z_0}(a)\}|=a$. Analogously we define the function $G_{x_0}(a)$ (and $\beta_{x_0}(a)$) using the Green function $\psi_{x_0}$ of $D^{\star}$ with $x_0$ the center of $D^{\star}$.

\medskip

To keep the same notation as above, we will just write, for the rest of the proof
$$
G\doteq G_{z_0}\,,\ \ \ G^{\star}\doteq G_{x_0}.
$$
We recall that $G(a)\sim 4\pi a$ as $a\to 0$. This follows from the fact that $\psi\sim-\frac{1}{2\pi}\log r$ as $r\to 0$, where $r$ is the geodesic distance from $z_0$, and that $\psi+\frac{1}{2\pi}\log r$ is a smooth function near $z_0$. The same holds true for $G^{\star}$. However, for $G^{\star}$ we have an explicit expression:
$$
G^{\star}(a)=a(4\pi-a),
$$
which follows from the {\it geometric isoperimetric inequality} \eqref{geo_iso}, valid for spherical domains (see \eqref{ineq_G}).
\begin{lem}\label{1dreduction}
We have that
\begin{equation}\label{K1}
\kappa_1(\Omega,A)=\min_{0\ne f\in\mathcal F}\frac{\int_0^M\left(G(a)f'(a)^2+\frac{4\pi^2\nu^2}{G(a)}f(a)^2\right)da}{\int_0^Mf(a)^2da}
\end{equation}
where $\mathcal F=\{f\in L^2(0,M):\sqrt{G}f',f/\sqrt{G}\in L^2(0,M)\}$. In particular, $\kappa_1(\Omega,A)$ is the first eigenvalue of the following Sturm-Liouville problem in $(0,M)$:
\begin{equation}\label{SL1}
\begin{cases}
-(Gf')'+\frac{4\pi^2\nu^2}{G}f=\kappa f\,, & {\rm in\ }(0,M)\,,\\
\lim_{a\to 0}G(a)f'(a)=f'(M)=0.
\end{cases}
\end{equation}
The same statements hold for $\kappa_1(D^{\star},A^{\star})$, replacing $G$ with $G^{\star}$.
\end{lem}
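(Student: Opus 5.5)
The plan is to change variables to the area parameter $a$ and then apply the coarea formula. Since $\psi$ has no critical points in $\Omega\setminus\{z_0\}$, the map $a\mapsto\beta(a)$ is a smooth, strictly decreasing bijection from $(0,M)$ onto $(0,\infty)$, with $\beta(M)=0$ because $\psi=0$ on $\partial\Omega$. Differentiating $|\{\psi>\beta(a)\}|=a$ with the coarea formula gives
$$
1=-\beta'(a)\,G(a),\qquad\text{that is,}\qquad \beta'(a)=-\frac{1}{G(a)}.
$$
Every $u=g\circ\psi\in\mathcal R(\Omega)$ can therefore be written as $u=f\circ\alpha$, where $\alpha=\beta^{-1}$ (so $\alpha(\psi(x))=|\{\psi>\psi(x)\}|$) and $f=g\circ\beta$. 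The whole statement then reduces to converting the Rayleigh quotient into integrals in $a$.

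\textbf{Step 1: the denominator.} By the coarea formula,
$$
\int_\Omega |u|^2=\int_0^\infty |g(t)|^2\int_{\psi=t}\frac{1}{|d\psi|}\,dt=\int_0^M |f(a)|^2\,da,
$$
after substituting $t=\beta(a)$ and using $dt=-da/G$.

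\textbf{Step 2: the numerator.} Since $u$ is complex-valued, I expand
$$
|d^Au|^2=|du|^2+|u|^2|A|^2-2\,\mathrm{Re}\bigl(i\bar u\langle du,A\rangle\bigr)\cdot(\pm1).
$$
Here $du=g'(\psi)\,d\psi$, and $A=-2\pi\nu\star d\psi$ is pointwise orthogonal to $d\psi$. Hence the cross term vanishes and
$$
|d^Au|^2=\bigl(|g'(\psi)|^2+4\pi^2\nu^2|u|^2\bigr)\,|d\psi|^2.
$$
Applying the coarea formula again gives
$$
\int_\Omega |d^Au|^2=\int_0^\infty\bigl(|g'(t)|^2+4\pi^2\nu^2|g(t)|^2\bigr)H(t)\,dt,\qquad H(t)=\int_{\psi=t}|d\psi|.
$$
Two facts finish the computation. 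First, $H(t)=1$ for every $t$: this is the flux of $\nabla\psi$ across a level line, which equals $1$ by $\Delta\psi=\delta_{z_0}$ and the divergence theorem on $\{\psi>t\}$. Second, $g'(\beta(a))=-G(a)f'(a)$. Substituting $dt=da/G$ turns the numerator into
$$
\int_0^M\Bigl(G|f'|^2+\frac{4\pi^2\nu^2}{G}|f|^2\Bigr)\,da.
$$
Real and imaginary parts decouple, so the minimum may be taken over real $f$. Moreover $u\in H^1_A$ corresponds exactly to $\sqrt G f'\in L^2$ and $f/\sqrt G\in L^2$, which identifies the admissible class with $\mathcal F$.

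\textbf{Step 3: the Sturm--Liouville problem.} The quadratic form is closed and coercive on $\mathcal F$. Its embedding into $L^2(0,M)$ is compact: near $0$ we have $G\sim 4\pi a$ and the term $f^2/G$ gives control, while near $M$ the function $G$ is bounded below, since $\partial\Omega$ is a level line with $|d\psi|>0$. Therefore the minimum is attained, and the Euler--Lagrange equation is $-(Gf')'+\frac{4\pi^2\nu^2}{G}f=\kappa f$. Integrating by parts against arbitrary test functions in $\mathcal F$ produces the natural boundary conditions $f'(M)=0$ and $\lim_{a\to0}G(a)f'(a)=0$. The limit at $0$ exists, and the boundary term there vanishes because $\nu>0$ forces finiteness of $\int f^2/G$, and hence $f(0)=0$ in a suitable averaged sense. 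The same argument applies verbatim to $D^\star$ with $G^\star$.

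\textbf{Main obstacle.} I expect the delicate part to be the endpoint $a=0$: showing that the boundary term $Gff'$ vanishes as $a\to0$ and that the minimizer belongs to the right domain. Near $a=0$ the problem is limit-circle-like with a Bessel-type singularity $\nu^2/a$. I would handle it by comparison with the explicit model $G\sim4\pi a$. The weighted Hardy-type bound $\int f^2/a<\infty$ yields $\liminf|f|=0$, and the equation gives monotonicity of $Gf'$ near $0$, which together allow passing to the limit.
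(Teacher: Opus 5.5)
Your proposal is correct and follows the paper's proof essentially step by step. It uses the coarea formula, the flux identity $\int_{\{\psi=t\}}|d\psi|=1$, and the change of variables $t=\beta(a)$ with $\beta'=-1/G$, which turns $\int_0^\infty(g'^2+4\pi^2\nu^2g^2)\,dt$ into the weighted integral over $(0,M)$. You are somewhat more careful than the paper, which restricts to real $u$ and calls the Sturm--Liouville identification standard: you handle complex $u$ via the pointwise orthogonality of $A=-2\pi\nu\star d\psi$ and $d\psi$, and you sketch a valid endpoint argument at $a=0$ (monotonicity of $Gf'$ together with $\liminf_{a\to0}|f|=0$).
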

\begin{proof}
Let $u$ be real-valued and of the form $u=g\circ\psi$. Then, since $|A|^2=4\pi^2\nu^2|d\psi|^2$:
$$
|d^Au|^2=(g'\circ\psi)^2|d\psi|^2+(g\circ\psi)^2|A|^2=\left((g'\circ\psi)^2+4\pi^2\nu^2(g\circ\psi)^2\right)|d\psi|^2
$$
hence by the coarea formula we get
$$
\int_{\Omega}|d^Au|^2=\int_0^{\infty}\left(g'(t)^2+4\pi^2\nu^2g(t)^2\right)\int_{\psi=t}|d\psi|\,dt=\int_0^{\infty}\left(g'(t)^2+4\pi^2\nu^2g(t)^2\right)dt.
$$
The last identity follows since
$$
\int_{\psi=t}|d\psi|=\int_{\psi=t}\partial_N\psi=\int_{\{\psi>t\}}\Delta\psi=1,
$$
where $N$ is the inward unit normal to the set $\{\psi>t\}$. Now, let
$$
\alpha(t)\doteq |\{\psi>t\}|=\int_t^{\infty}\int_{\psi=s}\frac{1}{|d\psi|}\,dt
$$
so that
$$
\alpha'(t)=-\int_{\psi=t}\frac{1}{|d\psi|}.
$$
Recall that $\psi$ has no critical points in $\overline\Omega\setminus\{z_0\}$, hence $\alpha:(0,\infty)\to (0,M)$ is smooth, strictly decreasing, and admits a smooth inverse $\beta:(0,M)\to (0,\infty)$. We change variable and set
$$
t=\beta(a)\,,\ \ \ f=g\circ\beta,
$$
hence $g(t)=f(a)$. We have that
$$
\beta'(a)=-\frac{1}{G(a)}
$$
where we recall that $G(a)=\int_{\psi=\beta(a)}\frac{1}{|d\psi|}$. We conclude that
$$
\int_{\Omega}|d^Au|^2=\int_0^{\infty}\left(g'(t)^2+4\pi^2\nu^2 g(t)^2\right)dt=\int_0^M\left(G(a)f'(a)^2+\frac{4\pi^2\nu^2}{G(a)}f(a)^2\right)da.
$$
Analogously, we have that
$$
\int_{\Omega}u^2=\int_0^{\infty}g(t)^2\int_{\psi=t}\frac{1}{|d\psi|^2}\,dt=\int_0^Mf(a)^2da.
$$
This proves \eqref{K1}. Now, from \eqref{K1} it is standard to check that $\kappa_1(\Omega,A)$ is the first eigenvalue of the Sturm-Liouville problem \eqref{SL1}.
\end{proof}

\subsection{Monotonicity of the function $G$}

We prove now that $G\geq G^{\star}$ and discuss the equality case.

\begin{lem}\label{lem:isoper} One  has on $(0,M)$:
    \begin{equation*}
     G\geq G^{\star} 
    \end{equation*}
    with equality if and only if $\Omega=D^{\star}$ and $A=A^{\star}$.
\end{lem}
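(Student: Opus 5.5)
Let $\Omega_a\doteq\{\psi>\beta(a)\}$, so that $|\Omega_a|=a$. Since $\psi$ has no critical points and the flux of $d\psi$ through every level line is $1$, the set $\Omega_a$ is a topological disk containing $z_0$ with boundary $\psi^{-1}(\beta(a))$. The idea is to combine Cauchy--Schwarz on the level line with the geometric isoperimetric inequality for surfaces with curvature bounded above by $1$ (the inequality referred to in the paper as \eqref{geo_iso}):
\[
L(\partial\Omega_a)^2\geq |\Omega_a|(4\pi-|\Omega_a|),
\]
valid for simply connected domains with $K\leq 1$. Equality in it holds only for a geodesic disk isometric to a spherical cap.

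\textbf{Step 1: Cauchy--Schwarz on the level line.} Along $\psi=\beta(a)$ we have
\[
L(\partial\Omega_a)^2=\Bigl(\int_{\psi=\beta(a)}1\Bigr)^2\leq \int_{\psi=\beta(a)}|d\psi|\int_{\psi=\beta(a)}\frac{1}{|d\psi|}=1\cdot G(a),
\]
using $\int_{\psi=t}|d\psi|=1$ from the proof of Lemma \ref{1dreduction}. Hence
\[
G(a)\geq L(\partial\Omega_a)^2\geq a(4\pi-a).
\]

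\textbf{Step 2: identify $G^{\star}$.} On the cap $D^\star$ the Green function is radial, so $|d\psi^\star|$ is constant on each level circle. Both inequalities in Step 1 are then equalities, and $G^\star(a)=a(4\pi-a)$. Combined with Step 1 this gives $G\geq G^\star$ on $(0,M)$. (When $M=4\pi$ the set $\Omega_a$ still has area $a<4\pi$, so the isoperimetric inequality applies for every $a\in(0,M)$.)

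\textbf{Step 3: the equality case.} This is where the main work lies. Suppose $G=G^\star$ on $(0,M)$. Then for every $a$ two things hold:
\begin{enumerate}[a)]
\item $|d\psi|$ is constant on the level line, from equality in Cauchy--Schwarz;
\item $\Omega_a$ is isometric to a spherical cap in $\mathbb S^2$, from the rigidity part of the isoperimetric inequality.
\end{enumerate}
Letting $a\to M$ shows that $\Omega$ is the union of the increasing family of caps $\Omega_a$. I would then argue as follows:
\begin{itemize}
\item The curvature equals $1$ on each $\Omega_a$, hence on all of $\Omega$.
\item The level lines are geodesic circles, since each $\partial\Omega_a$ has constant geodesic curvature.
\item These circles are concentric about $z_0$: $\psi$ blows up only at $z_0$, and $z_0$ lies in every $\Omega_a$.
\end{itemize}
Therefore $\psi$ is radial about $z_0$, $\Omega$ is a cap centered at $z_0$, and $A=-2\pi\nu\star d\psi$ coincides with $A^\star=\nu\,d\theta$ up to the isometry.

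The delicate point is making the rigidity of the isoperimetric inequality precise. For $K\leq 1$ this is the Bol--Fiala/Alexandrov inequality, and its equality case is known. The main care needed is in passing from equality on each sublevel set to the global statement that $\Omega$ is a cap with pole at its center; nested caps sharing the singular point $z_0$ should make this routine. The converse is immediate from Step 2.
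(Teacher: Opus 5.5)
\textbf{Gap in the equality case.} Your proof of $G\geq G^{\star}$ is the paper's argument: Cauchy--Schwarz on the level line with $\int_{\psi=t}|d\psi|=1$, then the isoperimetric inequality $\ell^2\geq\mathcal A(4\pi-\mathcal A)$, and $G^{\star}(a)=a(4\pi-a)$ because both steps are equalities on the centered cap. The gap is in the equality case, at your third bullet. The level circles being nested, all containing $z_0$, and shrinking to $z_0$ where $\psi$ blows up does \emph{not} make them concentric.

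Here is a counterexample. Take $\Omega$ a spherical cap and $z_0$ not its center. The Dirichlet Green function is conformally invariant in dimension two, so stereographic projection turns $\psi$ into the Green function of a planar disk with an off-center pole, whose level lines are Apollonius circles. Back on the sphere these are nested circles. So every $\Omega_a$ is a spherical cap containing $z_0$: your condition (b) holds for every $a$, and so do your first two bullets. The $\Omega_a$ shrink to $z_0$, yet the circles are not concentric. In this example only condition (a) fails. Since (a) is recorded but never used in your argument, as written it would also ``prove'' that this configuration is radial, which is false (here $G>G^{\star}$).

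\textbf{The missing step is to use (a).}
\begin{enumerate}
\item On each level line write $|d\psi|=h(\psi)$, where $h(t)=1/|\{\psi=t\}|$ by $\int_{\psi=t}|d\psi|=1$.
\item Set $\rho=F(\psi)$ with $F'=1/h$. Then $|d\rho|=1$, so the integral curves of $\nabla\rho$ are geodesics orthogonal to every level line, and the level lines are mutually equidistant.
\item Inside the cap $\Omega_b$, the geodesics orthogonal to $\partial\Omega_b$ are its radii. Hence every $\partial\Omega_a$ with $a<b$ is a circle concentric with $\partial\Omega_b$.
\item These circles shrink to $z_0$, so the common center is $z_0$.
\end{enumerate}
Hence $\psi$ is radial about $z_0$, $\Omega$ is the cap of area $M$ centered at $z_0$, and $\psi=\psi^{\star}$, $A=A^{\star}$ by uniqueness of the Green function. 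This is exactly what the paper compresses into the remark that constancy of $|d\psi|$ on the level lines forces $\psi$ to be radial.
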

\begin{proof}
For any $t\in(0,\infty)$ we have, by Cauchy-Schwarz inequality 
\begin{multline}\label{chain}
|\{\psi=t\}|=\int_{\psi=t}|d\psi|^{1/2}|d\psi|^{-1/2}\leq\left(\int_{\psi=t}|d\psi|\right)^{1/2}\left(\int_{\psi=t}\frac{1}{|d\psi|}\right)^{1/2}\\=\left(\int_{\psi=t}\frac{1}{|d\psi|}\right)^{1/2}
\end{multline}
with equality if and only if $|d\psi|$ is constant on $\{\psi=t\}$. Changing variables, and passing to the variable $a=\alpha(t)=|\{\psi>t\}|$, we have that \eqref{chain} reads
$$
|\{\psi=\beta(a)\}|^2\leq G(a),
$$
where $\beta$ is the inverse of $\alpha$. Now, if $\Omega$ is a simply connected Riemannian surface with boundary of area $\mathcal A$, boundary length $\ell$, and gaussian curvature bounded above by $1$, we have the following isoperimetric inequality:
\begin{equation}\label{geo_iso}
\ell^2\geq \mathcal A(4\pi-\mathcal A),
\end{equation}
with equality if and only if $\Omega$ is a disk of constant curvature $1$. We refer e.g., to \cite{chavel_isop} for a proof. Therefore we conclude that
\begin{equation}\label{ineq_G}
G(a)\geq a(4\pi-a)=G^{\star}(a),
\end{equation}
with equality if and only if each level set $\psi=t$ is a circle and $|d\psi|$ is constant on $\psi=t$. This last condition imposes that $\psi$ must be a radial function, hence $\Omega=D^{\star}$ and $A=A^{\star}$.
 \end{proof}
\subsection{Optimization of a 1-d problem}
Next we consider the following number
$$
\kappa_1(G)\doteq\min_{0\ne f\in\mathcal F}\frac{\int_0^M\left(G(a)f'(a)^2+\frac{4\pi^2\nu^2}{G(a)}f(a)^2\right)da}{\int_0^Mf^2(a)da},
$$
where $\mathcal F=\{f\in L^2(0,M):\sqrt{G}f',f/\sqrt{G}\in L^2(0,M)\}$ and $G:(0,M)\to (0,\infty)$ is any smooth, positive function satisfying
\begin{equation}\label{asymptotic}
G(a)\sim 4\pi a\,,\ \ \ a\to 0.
\end{equation}
Note that $\kappa_1(\Omega,A^{(\nu)}_{z_0})=\kappa_1(G_{z_0})$ where $G_{z_0}$ has been defined in \eqref{Gz0}. Recall also that $G^{\star}=a(4\pi-a)$. We want to optimize $\kappa_1(G)$ with respect to the weight $G$. In particular we have the following lemma. 
\begin{lem}\label{lem:optimsturmmliouville}
    Let $0< G_1\leq G_2$ on $(0,M)$, both satisfying \eqref{asymptotic}. Then
    \begin{equation*}
      \kappa_1(G_2)\leq \kappa_1(G_1).  
    \end{equation*}
    If $G_1<G_2$ on a set of positive measure, the inequality is strict.
\end{lem}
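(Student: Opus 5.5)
The Rayleigh quotient defining $\kappa_1(G)$ is not monotone in $G$ term by term: $G f'^2$ increases with $G$, while $4\pi^2\nu^2 f^2/G$ decreases. I will therefore undo the change of variables of Lemma \ref{1dreduction} and work in a ``$t$-variable'', where the numerator no longer depends on $G$. Write $c\doteq 2\pi\nu$. For a weight $G$ as in the statement, set
$$
t_G(a)\doteq\int_a^M\frac{da'}{G(a')}.
$$
By \eqref{asymptotic}, $t_G$ is a decreasing diffeomorphism $(0,M]\to[0,\infty)$. Let $a_G$ denote its inverse, so that $a_G'(t)=-G(a_G(t))$. For $f\in\mathcal F$ put $g=f\circ a_G$. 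As in the proof of Lemma \ref{1dreduction},
$$
\int_0^M\Big(Gf'^2+\frac{c^2}{G}f^2\Big)da=\int_0^\infty\big(g'^2+c^2g^2\big)dt,\qquad \int_0^Mf^2\,da=\int_0^\infty g(t)^2\,\big(-a_G'(t)\big)\,dt .
$$
So $\kappa_1(G)$ is the infimum over $g$ of a quotient whose numerator is independent of $G$ and whose domain $(0,\infty)$ is the same for every $G$. Only the denominator carries $G$. Integrating it by parts, using $a_G(0)=M$ and $g(t)\to0$ as $t\to\infty$ (finite energy forces $f(0)=0$), gives
$$
\int_0^\infty g^2(-a_G')\,dt=M\,g(0)^2+\int_0^\infty a_G(t)\,\big(-(g^2)'(t)\big)\,dt .
$$

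Now compare $G_1\le G_2$. Then $t_{G_2}\le t_{G_1}$ pointwise, hence $a_{G_2}(t)\le a_{G_1}(t)$ for all $t$. Let $f_1$ be the positive first eigenfunction of \eqref{SL1} with weight $G_1$, and let $g_1=f_1\circ a_{G_1}$. I plan to use $g_1$, transported back via $a_{G_2}$, as a test function for $\kappa_1(G_2)$. The numerator is unchanged. If $g_1$ is nonincreasing in $t$, i.e. $f_1$ is nondecreasing in $a$, then $-(g_1^2)'\ge0$, and the identity above shows that the denominator for $G_2$ is at least the one for $G_1$. This yields $\kappa_1(G_2)\le\kappa_1(G_1)$.

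For strictness, suppose $G_1<G_2$ on a set of positive measure. Then $a_{G_2}<a_{G_1}$ on a nonempty open interval of $t$. On that interval $(g_1^2)'<0$, because $f_1'$ cannot vanish identically on an interval: by the ODE this would force $f_1\equiv0$ there unless $c^2/G_1\equiv\kappa$, which fails for a nonconstant weight. So the denominator strictly increases and the inequality is strict.

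The main obstacle is the monotonicity of $f_1$. Integrating \eqref{SL1} from $0$ gives
$$
G_1(a)f_1'(a)=\int_0^a\Big(\frac{c^2}{G_1}-\kappa_1\Big)f_1 .
$$
This is positive near $0$, since $c^2/G_1\to\infty$, and it vanishes at $a=M$. It stays nonnegative if $c^2/G_1-\kappa_1$ changes sign only once, but $G_1$ need not be monotone. I would therefore first try a variational argument. If $f_1$ is not nondecreasing, replace it by $\tilde f(a)\doteq\max_{[0,a]}f_1$ (equivalently, work with $\tilde g(t)=\max_{[t,\infty)}g_1$ in the $t$-variable). Where $\tilde g\neq g_1$, the function $\tilde g$ is constant, so $\tilde g'^2=0\le g_1'^2$ there. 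The difficulty is that $c^2\tilde g^2$ is larger than $c^2g_1^2$, and the denominator also increases. I would compare the two effects on each excursion interval of $g_1$, using the equation $g''=c^2g-\kappa\,G(a_G)g$ satisfied by $g_1$ on that interval, and aim to show that the quotient does not increase, contradicting strict minimality. If this fails, the fallback is to exploit the specific structure available in the application, namely $G_2=G^\star=a(4\pi-a)$ (for which the radial eigenfunction is known to be increasing), and to use $f^\star$ as the test function in the opposite direction.
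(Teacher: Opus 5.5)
Your change to the level variable $t$ is a good idea, and the comparison of denominators is right for $g\ge0$ nonincreasing in $t$: since $t_{G_2}\le t_{G_1}$, you get $g(t_{G_2}(a))\ge g(t_{G_1}(a))$ pointwise. Your integration by parts has a sign slip, though. It should read $\int_0^\infty g^2(-a_G')\,dt=Mg(0)^2+\int_0^\infty a_G\,(g^2)'\,dt$; as written, your identity gives the opposite inequality.

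\textbf{The gap.} The real problem is the step you flagged: $f_1$ is in general \emph{not} monotone, so the argument cannot be closed this way. In the $t$-variable the eigenfunction solves $-g''+c^2g=\kappa_1\,G(a_G(t))\,g$ with $g'(0)=0$. Hence $g''(0)=(c^2-\kappa_1G(M))g(0)$, and whenever $\kappa_1(G)\,G(M)<c^2$, $g$ increases near $t=0$, i.e.\ $f$ decreases near $a=M$. This already happens for the model weight $G^\star=a(4\pi-a)$ when $M$ is close to $4\pi$: there $G^\star(M)\to0$ while $\kappa_1(G^\star)\le\nu(\nu+1)$ (test with $\sin^\nu r$). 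Concretely, the ground state of a large cap looks like $\sin^\nu r$, which decreases past the equator.

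So your fallback only works when $M\le2\pi$, where $G^\star$ is increasing. That is exactly the regime already covered by Theorem \ref{cps} b). Note also that in the application $G^\star$ plays the role of the \emph{smaller} weight $G_1$, not $G_2$. The rearrangement $\max_{[0,a]}f_1$ cannot help either. If $f_1$ is not monotone, the rearranged function differs from $f_1$, and since $\kappa_1$ is simple its quotient is strictly larger; you would be trying to prove something false.

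\textbf{The missing idea.} What you need is weaker than monotonicity, and it must be used along a path rather than in one shot.
\begin{enumerate}
\item Put $G_s=(1-s)G_1+sG_2$. At each $s$, use the $G_s$-eigenfunction as a test function for $G_{s+h}$ in your $t$-formulation. The numerator is unchanged.
\item By Fubini, the first-order change of the denominator is $\int_0^M\frac{G_2-G_1}{G_s^2}\,\Phi_s\,da$, with $\Phi_s(a)=\int_0^aG_s\,(f_s^2)'$.
\item Differentiating $G_s^2f_s'^2-c^2f_s^2$ with the ODE gives $\kappa_1(G_s)\,\Phi_s=c^2f_s^2-G_s^2f_s'^2$. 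So the sign you need is $|G_sf_s'|\le 2\pi\nu f_s$. This is exactly the Feynman--Hellmann integrand in the paper, and it allows $f_s'<0$.
\end{enumerate}

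The paper proves this bound through the Riccati equation $R'=(c^2-R^2)/G_s-\kappa_1(G_s)$ for $R=G_sf_s'/f_s$. A point $a_0$ with $R\ge2\pi\nu$ forces $R>2\pi\nu$ and decreasing on all of $(0,a_0)$, which is incompatible with $G_s\sim4\pi a$ at $0$. A point with $R\le-2\pi\nu$ forces $R<-2\pi\nu$ on $(a_0,M)$, contradicting $R(M)=0$.

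By contrast, a single comparison with $g_1$ would require $\int_0^\infty (g_1^2)'\,(a_{G_1}-a_{G_2})\,dt\le0$ for a general nonnegative $a_{G_1}-a_{G_2}$. The weaker pointwise bound does not provide this.
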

\begin{proof}
For $t\in[0,1]$ consider
$$
G_t:=(1-t)G_1+tG_2.
$$
We  compute the derivative  of $\kappa_1(G_t)$ with respect to $t$.  Since the eigenvalue is simple, we can apply Feynman-Hellmann formula \cite[VII-\S4, p.408, n. 4.56]{kato1} to get
$$
\frac{d}{dt}\kappa_1(G_t)=\int_0^M\left(G_t(a)^2f_t'(a)^2-4\pi^2\nu^2 f_t(a)^2\right)\frac{G_2(a)-G_1(a)}{G_t(a)^2}\,da
$$
where $f_t$ is the first positive $L^2$-normalized eigenfunction corresponding to the eigenvalue $\kappa_1(G_t)$. In order to conclude we have to prove that
$$
G_t^2f_t'^2-4\pi^2\nu^2f_t^2< 0.
$$
Set
$$
R=\frac{G_tf_t'}{f_t}.
$$
We see that $R$ is smooth in $(0,M)$, and moreover, from the Neumann condition at $M$, we have that $R(M)=0$. We have to show that
$$
|R(a)|< 2\pi\nu\ \ \ \forall a\in(0,M).
$$
{\bf First step.} Assume by contradiction that there exists $a_0\in (0,M)$ such that $R(a_0)\geq 2\pi\nu$. Differentiating $R$ we get
\begin{equation}\label{eq:DerivR}
   R'=\frac{4\pi^2\nu^2-R^2}{G_t}-\kappa_1(G_t). 
\end{equation}
In particular $R'(a_0)<0$, and this implies, along with \eqref{eq:DerivR}, that $R$  is decreasing (and positive) in the whole interval $(0,a_0)$. Hence limit $\lim_{a\rightarrow 0^+} R(a)$ exists, so we have two cases

\smallskip

\textbf{Case $1$}: $\lim_{a\rightarrow 0^+} R(a)$ is finite. In particular then, $\lim_{a\rightarrow 0^+} R(a)>2\pi\nu$. We integrate \eqref{eq:DerivR} and we obtain 
\begin{equation*}
R(a_0)-R(a)=\int_a^{a_0}R'(s)ds=\int_a^{a_0}\frac{4\pi^2\nu^2-R^2}{G_t}-\kappa_1(G_t)ds.   
\end{equation*}
But now we recall that $G_t(a)\sim 4\pi a$ when $a\rightarrow 0^+$, hence, since $\lim_{a\rightarrow 0^+} R(a)>2\pi\nu$, we get that the right-hand side of the above equation is unbounded (tends to $-\infty$) as $a\to 0^+$, while the left-hand side remains bounded.

\smallskip

\textbf{Case $2$}: $\lim_{a\rightarrow 0^+} R(a)=+\infty$. From \eqref{eq:DerivR}, recalling that $G_t(a)\sim 4\pi a$ as $a\to 0^+$, we obtain 
\begin{equation*}
  \lim_{a\rightarrow 0^+} \frac{a R'}{R^2}=-\frac{1}{4\pi}.  
\end{equation*}
Therefore, there exists $\bar a>0$ and $\epsilon>0$ such that 
\begin{equation*}
  \frac{1}{a}\left (-\frac{1}{4\pi}-\epsilon \right )\leq \left (-\frac{1}{R}\right )'\leq \frac{1}{a}\left (-\frac{1}{4\pi}+\epsilon \right) 
\end{equation*}
on $(0,\bar a)$. We can integrate both sides in $(a,\bar a)$, and we obtain 
\begin{equation*}
 \left (-\frac{1}{4\pi}-\epsilon \right ) \ln({\frac{\bar{a}}{a}})\leq \frac{1}{R(a)}-\frac{1}{R(\bar a)}\leq \left (-\frac{1}{4\pi}+\epsilon \right ) \ln({\frac{\bar{a}}{a}}) 
\end{equation*}
In particular from the above inequality we can conclude that $\lim_{a\rightarrow 0^+}R(a)=0$ which is a contradiction.

We conclude that there is no $a_0\in (0,M)$ such that $R(a_0)\geq 2\pi\nu$.

\medskip

\noindent{\bf Second step.} It remains to consider the case in which there exists $a_0\in(0,M)$ such that $R(a_0)\leq-2\pi\nu$. This implies, by \eqref{eq:DerivR}, that $R'<0$ on $(a_0,M)$, and hence $R(M)<-2\pi\nu$, which is a contradiction with $R(M)=0$.
\end{proof}



\subsection{Conclusion of the proof}
The proof of Theorem \ref{main_boundary} now follows from \eqref{ineq1}, \eqref{eq1} and Lemmas \ref{lem:isoper} and \ref{lem:optimsturmmliouville}:
\begin{align*}
\mu_1(\Omega,A)&\leq\kappa_1(\Omega,A)\ \ \ {\rm by\ \eqref{ineq1}}\\
&\leq \kappa_1(D^{\star},A^{\star})\ \ \ {\rm by\ Lemmas\  \ref{lem:isoper}\ and\ \ref{lem:optimsturmmliouville}}\\
&=\mu_1(D^{\star},A^{\star})\ \ \ {\rm by\ \eqref{eq1}}.
\end{align*}
If equality holds everywhere, then $\kappa_1(\Omega,A)=\kappa_1(D^{\star},A^{\star})$ which implies from Lemmas \ref{lem:isoper} and \ref{lem:optimsturmmliouville} that $\Omega=D^{\star}$ and $A=A^{\star}$.

\section{Proof of Theorems \ref{main_closed} and \ref{main_closed_2}}\label{sec_main_closed}
The proof is similar to that of Theorem \ref{main_boundary}. However, note that Theorems \ref{main_closed} and \ref{main_closed_2} give an upper bound which is not isoperimetric. In particular, in the case of a closed surface $\Sigma$, the bound on the  gaussian curvature implies that $|\Sigma|\geq |\mathbb S^2_K|$, and if $\Sigma\ne\mathbb S^2_K$, the inequality is strict. As in Section \ref{sec_main_boundary}, we may suppose that $K=1$. We will prove Theorem \ref{main_closed} first.

\subsection{Proof of Theorem \ref{main_closed}: reduction to a 1-d problem}
 By gauge invariance, we may always assume $\nu\in(0,\frac{1}{2}]$.   Let $\psi_{z_1,z_2}$ be the Green function of $\Sigma$ with poles $z_1,z_2$ (defined in \eqref{green_closed}) and $A^{(\nu)}_{z_1,z_2}=-2\pi\nu\star d\psi_{z_1,z_2}$. We also set
$$
M\doteq |\Sigma|
$$
and define
\begin{equation}\label{Gz1z2}
G_{z_1,z_2}(a)\doteq\int_{\psi_{z_1,z_2}=\beta_{z_1,z_2}(a)}\frac{1}{|d\psi_{z_1,z_2}|}
\end{equation}
where $\beta_{z_1,z_2}(a)$ is defined by $|\{\psi_{z_1,z_2}>\beta_{z_1,z_2}(a)\}|=a$. The behavior of $G_{z_1,z_2}$ near the poles is as follows: $G_{z_1,z_2}(a)\sim 4\pi a$ as $a\to 0^+$ and $G_{z_1,z_2}(a)\sim 4\pi(M-a)$ as $a\to M^-$. As in the previous section, to simplify the notation we let
 $$
A\doteq A_{z_1,z_2}^{(\nu)}\,,\ \ \ \psi\doteq \psi_{z_1,z_2}\,,\ \ \ G\doteq G_{z_1,z_2}.
$$
We also set
 $$
A^{\star}\doteq A_{x_0}^{(\nu)}\,,\ \ \ \psi^{\star}\doteq \psi_{x_0}\,,\ \ \ G^{\star}\doteq G_{x_0}=a(4\pi-a),
$$
where $\psi_{x_0}$ is the Green function on $\mathbb S^2$ with two antipodal poles $x_0,-x_0$, $A^{(\nu)}_{x_0,}=-2\pi\nu\star d\psi_{x_0}=\nu d\theta$ and $G_{x_0}$ is defined by \eqref{Gz1z2} with $\psi_{x_0}$.
 Consider now the function space
$$
\mathcal R(\Sigma)\doteq\{u:u=g\circ\psi\,,\ g\in H^1(\mathbb R)\}\subset H^1_{A}(\Sigma)
$$
and the number
\begin{equation}\label{w1}
w_1(\Omega,A)\doteq\min_{0\ne u\in\mathcal R(\Sigma)}\frac{\int_{\Sigma}|d^Au|^2}{\int_{\Sigma}|u|^2}.
\end{equation}
From the min-max principle we have
\begin{equation}\label{ineqS1}
\mu_1(\Sigma,A)\leq w_1(\Sigma,A).
\end{equation}
We have the following lemma, whose proof is analogous to that of Lemma \ref{1dreduction}
\begin{lem}\label{lem_w1}
We have that
\begin{equation}\label{w1_variable_a}
w_1(\Sigma,A)=\min_{0\ne f\in\mathcal F}\frac{\int_0^M\left(G(a)f'(a)^2+\frac{4\pi^2\nu^2}{G(a)}f(a)^2\right)da}{\int_0^Mf(a)^2da},
\end{equation}
where $\mathcal F=\{f\in L^2(0,M):\sqrt{G}f',f/\sqrt{G}\in L^2(0,M)\}$. In particular $w_1(\Sigma,A)$ is the first eigenvalue of the following Sturm-Liouville problem
\begin{equation}\label{SL2}
\begin{cases}
-(Gf')'+\frac{4\pi^2\nu^2}{G}f=w f\,, & {\rm in\ }(0,M)\,,\\
\lim_{a\to 0^+}G(a)f'(a)=\lim_{a\to M^-}G(a)f'(a)=0.
\end{cases}
\end{equation}
\end{lem}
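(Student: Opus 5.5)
The plan is to repeat the proof of Lemma \ref{1dreduction}, with one change: the Green function $\psi=\psi_{z_1,z_2}$ now takes every real value. Near $z_1$ it tends to $+\infty$ like $-\frac{1}{2\pi}\log r$, and near $z_2$ it tends to $-\infty$ like $\frac{1}{2\pi}\log r$. So the level parameter $t$ ranges over $\mathbb R$ instead of $(0,\infty)$. I take $u=g\circ\psi$ with $g$ real-valued; since the minimum in \eqref{w1} can be computed on real functions, this costs nothing.

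\textbf{Energy.} Using $|A|^2=4\pi^2\nu^2|d\psi|^2$ we get
\[
|d^Au|^2=\bigl((g'\circ\psi)^2+4\pi^2\nu^2(g\circ\psi)^2\bigr)|d\psi|^2 .
\]
The coarea formula then gives
\[
\int_\Sigma|d^Au|^2=\int_{\mathbb R}\bigl(g'(t)^2+4\pi^2\nu^2g(t)^2\bigr)\Bigl(\int_{\psi=t}|d\psi|\Bigr)dt .
\]
The key identity is $\int_{\psi=t}|d\psi|=1$ for every $t\in\mathbb R$. Apply the divergence theorem on $\{\psi>t\}$: this region contains $z_1$ but not $z_2$, so $\int_{\{\psi>t\}}\Delta\psi=\int_{\{\psi>t\}}(\delta_{z_1}-\delta_{z_2})=1$. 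This is where $\Delta\psi=\delta_{z_1}-\delta_{z_2}$ enters.

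\textbf{Change of variables.} Set $\alpha(t)=|\{\psi>t\}|$. If $\psi$ has no critical points on $\Sigma\setminus\{z_1,z_2\}$, then $\alpha:\mathbb R\to(0,M)$ is a smooth decreasing bijection with $\alpha'(t)=-\int_{\psi=t}|d\psi|^{-1}$. Let $\beta=\alpha^{-1}$, so $\beta'(a)=-1/G(a)$, and put $f=g\circ\beta$. Then
\[
\int_{\mathbb R}g'(t)^2\,dt=\int_0^M G f'^2\,da,\qquad \int_{\mathbb R}g(t)^2\,dt=\int_0^M \frac{f^2}{G}\,da .
\]
Applying the coarea formula with weight $|d\psi|^{-1}$ gives $\int_\Sigma u^2=\int_0^M f^2\,da$. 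Under this correspondence, $g\in H^1(\mathbb R)$ (more precisely, $u\in H^1_A$) matches $\sqrt G f',\ f/\sqrt G\in L^2(0,M)$, i.e.\ $f\in\mathcal F$. This yields \eqref{w1_variable_a}.

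\textbf{Sturm--Liouville problem.} The weights satisfy $G(a)\sim4\pi a$ at $0$ and $G(a)\sim 4\pi(M-a)$ at $M$, and the potential $4\pi^2\nu^2/G$ blows up at both endpoints. The form is closed and lower bounded on $\mathcal F$. Since $\mathcal F\hookrightarrow L^2$ compactly (the embedding is compact near the endpoints because of the $1/G$ term), the minimum is attained. The Euler--Lagrange equation is $-(Gf')'+\frac{4\pi^2\nu^2}{G}f=wf$. The natural boundary terms $[Gf'\varphi]_0^M$ must vanish for all test functions $\varphi\in\mathcal F$, which gives $\lim Gf'=0$ at both ends; these are the conditions in \eqref{SL2}.

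\textbf{Main obstacle.} The delicate point is the absence of critical points of $\psi$. On a closed simply connected surface, i.e.\ a topological sphere, I would argue as follows. Each superlevel set $\{\psi>t\}$ contains no interior extremum, by harmonicity away from the poles, and so is connected and contains $z_1$; the same holds for sublevel sets. A critical point would create a level set with a figure-eight component separating components of a super- or sublevel set, and one of those components would then contain neither pole, contradicting the maximum principle. Alternatively, the conformal map to $\widehat{\mathbb C}$ sending $z_1,z_2$ to $0,\infty$ turns $\psi$ into $-\frac1{2\pi}\log|z|$ plus a constant (the Green function is conformally invariant in dimension two), which has no critical points and has circles as level sets. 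Given this, the rest is routine, as in Lemma \ref{1dreduction}.
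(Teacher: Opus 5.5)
Your proposal is correct and is essentially the paper's proof. The paper simply says the argument of Lemma \ref{1dreduction} carries over: coarea formula, $\int_{\psi=t}|d\psi|=1$, the substitution $a=|\{\psi>t\}|$, $f=g\circ\beta$, with $t$ now ranging over $\mathbb R$. Your uniformization argument for the absence of critical points of $\psi$ fills in a point the paper takes for granted.

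One phrase is loose: every $\varphi\in\mathcal F$ already tends to $0$ at $a=0$ and $a=M$ (since $\varphi\circ\alpha\in H^1(\mathbb R)$), so $\lim Gf'=0$ does not follow from the vanishing of the boundary term alone. It follows because $g=f\circ\alpha\in H^1(\mathbb R)$ solves $-g''+4\pi^2\nu^2g=w\,G(\alpha(t))\,g$ with $G(\alpha(t))\to0$ as $t\to\pm\infty$, which forces $g'=-Gf'\to0$ at both ends.
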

\begin{rem}
Note that in the definition of $\mathcal R(\Sigma)$ we are considering functions $g\in H^1(\mathbb R)$ since now the Green function on $\Sigma$ with poles $z_1,z_2$ assumes all real values (it goes to $+\infty$ and $-\infty$ at $z_1$, $z_2$, respectively).
\end{rem}
\subsection{Optimization of the $1$-d problem}
Let now $G$ be any smooth, positive function in $(0,M)$ satisfying 
\begin{equation}\label{asymptotic_closed}
G(a)\sim 4\pi a \ {\rm as}\ a\to 0^+\ \ \ {\rm and\ \ \ }G(a)\sim 4\pi(M-a) \ {\rm as}\ a\to M^-,
\end{equation} and consider
$$
w_1(G)\doteq\min_{0\ne f\in\mathcal F}\frac{\int_0^M\left(G(a)f'(a)^2+\frac{4\pi^2\nu^2}{G(a)}f(a)^2\right)da}{\int_0^Mf^2(a)da},
$$
where $\mathcal F=\{f\in L^2(0,M):\sqrt{G}f',f/\sqrt{G}\in L^2(0,M)\}$. Note that $w_1(\Sigma,A^{(\nu)}_{z_1,z_2})=\kappa_1(G_{z_1,z_2})$. We have the following
\begin{lem}\label{lem:optimsturmmliouville_2}
    Let $0< G_1\leq G_2$ on $(0,M)$, both satisfying \eqref{asymptotic_closed}. Then
    \begin{equation*}
      w_1(G_2)\leq w_1(G_1).  
    \end{equation*}
    If $G_1<G_2$ on a set of positive measure, the inequality is strict.
\end{lem}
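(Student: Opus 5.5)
We will argue exactly as in the proof of Lemma \ref{lem:optimsturmmliouville}, by interpolation. For $t\in[0,1]$ set $G_t=(1-t)G_1+tG_2$; each $G_t$ is smooth, positive and satisfies \eqref{asymptotic_closed}. The first eigenvalue $w_1(G_t)$ of \eqref{SL2} is simple, with a positive normalized eigenfunction $f_t$. Both endpoints are now of limit-circle type with a singular coefficient $4\pi^2\nu^2/G$, and the boundary conditions select the bounded, regular-type solution at each end. The Feynman-Hellmann formula then gives
$$
\frac{d}{dt}w_1(G_t)=\int_0^M\left(G_t^2f_t'^2-4\pi^2\nu^2f_t^2\right)\frac{G_2-G_1}{G_t^2}\,da .
$$
Hence it suffices to show that the Riccati quantity $R=G_tf_t'/f_t$ satisfies $|R|<2\pi\nu$ on $(0,M)$. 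Monotonicity follows by integrating in $t$. Strictness follows because the integrand is then strictly negative wherever $G_2>G_1$, which holds on a set of positive measure.

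The Riccati equation \eqref{eq:DerivR}, $R'=(4\pi^2\nu^2-R^2)/G_t-w_1(G_t)$, is unchanged. Suppose $R(a_0)\ge 2\pi\nu$ for some $a_0$. Then $R'(a_0)<0$, so $R$ is decreasing and above $2\pi\nu$ on $(0,a_0)$. We rule this out near $a=0$ by Cases 1 and 2 of Lemma \ref{lem:optimsturmmliouville}, which use only $G_t\sim4\pi a$ at $0$.

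The new feature is the case $R(a_0)\le-2\pi\nu$. Previously this was excluded using $R(M)=0$ from the Neumann condition, which is no longer available. Instead we use symmetry. If $R(a_0)\le -2\pi\nu$, then $R'(a_0)<0$, and $R'<0$ persists on $(a_0,M)$, so $R$ decreases and stays below $-2\pi\nu$ there. Now substitute $b=M-a$ and $\tilde f(b)=f_t(M-b)$, $\tilde G(b)=G_t(M-b)$. Then $\tilde R(b)=\tilde G\tilde f'/\tilde f=-R(M-b)$ satisfies the same Riccati equation, and $\tilde G(b)\sim4\pi b$ as $b\to0^+$. The function $\tilde R$ is at least $2\pi\nu$ on $(0,M-a_0)$, which is exactly the situation excluded by the first step, Cases 1 and 2, applied near $b=0$. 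So the argument at $0$ handles both signs, one per endpoint.

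The main point to check is that the case analysis really rules out both limits at the singular endpoints. In Case 1, a finite limit exceeding $2\pi\nu$ makes $\int (4\pi^2\nu^2-R^2)/G$ diverge logarithmically, since $G\sim4\pi a$. In Case 2, $R\to+\infty$ leads to $1/R\sim -\frac{1}{4\pi}\ln(\bar a/a)\to-\infty$, which contradicts $R>0$, and this is the contradiction the argument relies on. Both use only the asymptotics \eqref{asymptotic_closed}. A secondary technical point is justifying Feynman-Hellmann and the simplicity of $w_1$ for this singular problem with two limit points. We take these as in Lemma \ref{lem:optimsturmmliouville}, using that the quadratic form domain $\mathcal F$ is independent of $t$ because $G_1\le G_t\le G_2$ are comparable.
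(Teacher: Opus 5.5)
Your proposal is correct and follows the paper's proof: the interpolation $G_t=(1-t)G_1+tG_2$, the Feynman--Hellmann formula, and the Riccati bound $|R|<2\pi\nu$, with the case $R(a_0)\ge 2\pi\nu$ excluded near $a=0$ exactly as in Lemma \ref{lem:optimsturmmliouville}. The only difference is presentational: for $R(a_0)\le -2\pi\nu$ the paper writes out Cases 1 and 2 again at the singular endpoint $a=M$ using $G_t\sim 4\pi(M-a)$, whereas your reflection $b=M-a$ sends $R$ to $-R$, preserves the Riccati equation, and so reduces this case to the first step without redoing the computation.
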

\begin{proof}
The proof is identical to that of Lemma \ref{lem:optimsturmmliouville}.
The only difference from the proof of Lemma \ref{lem:optimsturmmliouville} lies in the second step, namely the treatment of the case $R\leq-2\pi\nu$ since now also $a=M$ is a singular point. 

\smallskip

\noindent{\bf Second step.} Assume by contradiction that there exists a point $a_0\in(0,M)$ such that $R(a_0)\leq -2\pi \nu$. We see from \eqref{eq:DerivR} that $R'(a_0)<0$, and this implies, still using \eqref{eq:DerivR}, that $R$  is decreasing in the whole interval $(a_0,M)$. Hence limit $\lim_{a\rightarrow M^-} R(a)$ exists, so we have two cases
\smallskip

\textbf{Case $1$}: $\lim_{a\rightarrow M^-} R(a)$ is finite. In particular then, $\lim_{a\rightarrow M^-} R(a)<-2\pi\nu$. We integrate \eqref{eq:DerivR} and we obtain 
\begin{equation*}
R(a)-R(a_0)=\int_{a_0}^{a}R'(s)ds=\int_{a_0}^{a}\frac{4\pi^2\nu^2-R^2}{G_t}-\kappa_1(G_t)ds.   
\end{equation*}
But now we recall that $G_t(a)\sim 4\pi(M-a)$ when $a\rightarrow M^-$, hence, since $\lim_{a\rightarrow M^-} R(a)<-2\pi\nu$, we get that the right-hand side of the above equation is unbounded as $a\to M^-$, while the left-hand side remains bounded.

\smallskip

\textbf{Case $2$}: $\lim_{a\rightarrow M^-} R(a)=-\infty$. From \eqref{eq:DerivR}, recalling that $G_t(a)\sim 4\pi (M-a)$ as $a\to M^-$, we obtain 
\begin{equation*}
  \lim_{a\rightarrow M^-} \frac{(M-a) R'}{R^2}=-\frac{1}{4\pi}.  
\end{equation*}
Therefore, there exists $\bar a>0$ and $\epsilon>0$ such that 
\begin{equation*}
  \frac{1}{(M-a)}\left (-\frac{1}{4\pi}-\epsilon \right )\leq \left (-\frac{1}{R}\right )'\leq \frac{1}{(M-a)}\left (-\frac{1}{4\pi}+\epsilon \right) 
\end{equation*}
on $(\bar a,4\pi)$. We can integrate both sides in $(\bar a,a)$, and we obtain 
\begin{equation*}
 \left (-\frac{1}{4\pi}-\epsilon \right ) \ln\left({\frac{M-\bar{a}}{M-a}}\right)\leq \frac{1}{R(a)}-\frac{1}{R(\bar a)}\leq \left (-\frac{1}{4\pi}+\epsilon \right ) \ln\left({\frac{M-\bar{a}}{M-a}}\right)
\end{equation*}
In particular from the above inequality we can conclude that $\lim_{a\rightarrow M^-}R(a)=0$ that is a contradiction. The conclusion of Lemma \ref{lem:optimsturmmliouville_2}  is now the same as in Lemma \ref{lem:optimsturmmliouville}.
\end{proof}

\subsection{When $\abs{\Sigma}>\abs{\mathbb S^2}$: a sequence of weight functions}
Due to the volume bound $|\Sigma|\geq |\mathbb S^2|$, we cannot directly compare $w_1(G)$ with $w_1(G^{\star})$ when $\abs{\Sigma}>\abs{\mathbb S^2}$ simply because $G,G^{\star}$ are defined in different intervals: $(0,M)$ and $(0,4\pi)$, respectively, with $4\pi< M$ (recall, $M=|\Sigma|$). Moreover $M=4\pi$ if and only if $\Sigma=|\mathbb S^2|$. 

We define a sequence of weights $G^{\star}_\eps$ as follows. Let $\eps_0>0$ be a sufficiently small number to be fixed later, and let $\eps\in(0,\eps_0)$. Let
$$
a_\eps\doteq 2\pi+\sqrt{4\pi^2-\eps}\in(2\pi,4\pi)
$$
and
$$
b_{\eps}\doteq 2\pi-\sqrt{4\pi^2-\eps}\in(0,2\pi).
$$
These are the solutions of the equation $G^{\star}(a)=\eps$. Now, from Lemma \ref{lem:isoper} we have that
$$
G(a)\geq G^{\star}(a){\rm\  in\ }(0,4\pi){\rm \ \ \ and\ \ \ }G(a)\geq G^{\star}(M-a){\rm\  in\ }(M-4\pi,M)
$$
and moreover $G>0$ in $(0,M)$. Then there exists $\eps_0>0$ such that for all $\eps\in(0,\eps_0)$, we have $G\geq G^{\star}_\eps$, where
$$
G^{\star}_\eps=\begin{cases}
G^{\star}\,, & {\rm in\ } (0,a_\eps)\\
\eps & {\rm in\ } (a_\eps,M-b_\eps)\\
G^{\star}(a-M)\,, & {\rm in\ }(M-b_\eps,M).
\end{cases}
$$
See Figure \ref{fig1}. The function $G^{\star}_\eps$ is continuous on $(0,M)$ and converges uniformly to $G^{\star}$ in $(0,4\pi)$ and to $0$ in $(4\pi,M)$.

\begin{figure}[ht] 
\begin{center} 
  \includegraphics[width=12cm]{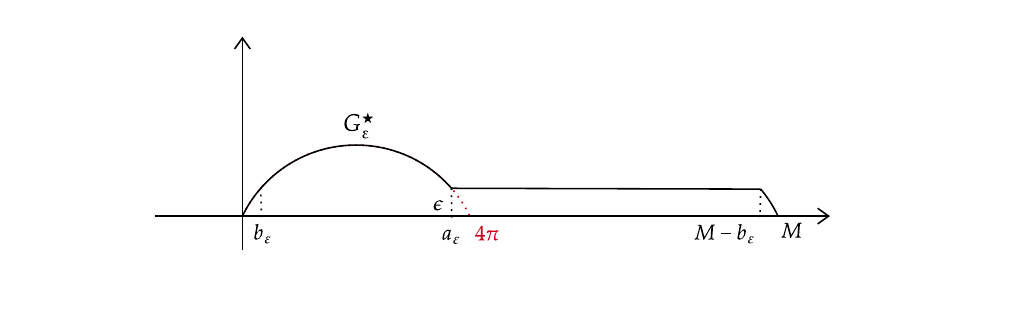}\\ 
  \caption{the function $G^{\star}_\eps$  }
  \label{fig1}
\end{center} 
\end{figure}

\subsection{Proof of the upper bound in Theorem \ref{main_closed}}
From Lemma \ref{lem:optimsturmmliouville_2} we deduce that, since $G\geq G^{\star}_\eps$ for all $\eps\in(0,\eps_0)$ we see:
\begin{equation}\label{ineq1closed}
w_1(G)\leq w_1(G^{\star}_\eps).
\end{equation}
Now, the upper bound \eqref{ineq_main_closed} in Theorem \ref{main_closed} is  a consequence of the following lemma and the fact that $\mu_1(\mathbb S^2,A^{\star})=\nu(\nu+1)$ if $\nu\in(0,\frac 12]$. The explicit expression of $\mu_1(\mathbb S^2,A^\star)$ is proved in Appendix \ref{appendix}. Recall that by $\lambda_1(\Omega,A)$ we denote the first magnetic Dirichlet eigenvalue on $\Omega$ with potential $A$, see \eqref{ev_dir}.

\begin{lem}
The following facts hold
\begin{enumerate}[i)]
\item $w_1(G^{\star}_\eps)\leq\lambda_1(D_{a_\eps},A^{\star})$,
where $\lambda_1(D_{a_\eps},A^{\star})$ is the first magnetic Dirichlet eigenvalue of a geodesic disk $D_{a_\eps}\subset\mathbb S^2$ of area $a_\eps$ with center $x_0$ and with $A^{\star}=-2\pi\nu\star d\psi_{x_0}=\nu d\theta$
\item $\lim_{\eps\to 0}\lambda_1(D_{a_{\eps}},A^{\star})=\mu_1(\mathbb S^2,A^{\star})$.
\end{enumerate}
\end{lem}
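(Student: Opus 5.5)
For i), I would use the variational characterization of $w_1(G^{\star}_\eps)$ from Lemma \ref{lem_w1} with a test function supported in $(0,a_\eps)$. Let $u$ be the first magnetic Dirichlet eigenfunction of $D_{a_\eps}$ with potential $A^{\star}=\nu d\theta$. Since $A^\star$ is rotationally invariant and the problem separates in Fourier modes $e^{ik\theta}$, the ground state can be taken radial (mode $k=0$, with the potential term $\nu^2/\sin^2 r$); this is the Dirichlet analogue of the fact used in \eqref{eq1}. So $u=g\circ\psi^{\star}$, and written in the area variable $u=f(a)$ with $f\in\mathcal F$ and $f(a_\eps)=0$. The computation in Lemma \ref{1dreduction} then gives
$$
\lambda_1(D_{a_\eps},A^\star)=\frac{\int_0^{a_\eps}\left(G^\star f'^2+\frac{4\pi^2\nu^2}{G^\star}f^2\right)da}{\int_0^{a_\eps}f^2da}.
$$
Extend $f$ by zero to $(0,M)$. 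The extension is in $H^1$ near $a_\eps$ because $f(a_\eps)=0$, it is admissible for $w_1(G^\star_\eps)$, and its Rayleigh quotient equals the expression above because $G^\star_\eps=G^\star$ on $(0,a_\eps)$. Hence $w_1(G^\star_\eps)\le\lambda_1(D_{a_\eps},A^\star)$. The one point to check is the boundary behaviour near $a=0$: membership in $\mathcal F$ follows from $u\in H^1_{0,A}$ by the coarea identities.

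For ii), as $\eps\to0$ we have $a_\eps\to4\pi$, so $D_{a_\eps}$ increases to $\mathbb S^2\setminus\{-x_0\}$. Domain monotonicity of Dirichlet eigenvalues gives that $\lambda_1(D_{a_\eps},A^\star)$ is nonincreasing. It is also bounded below by $\mu_1(\mathbb S^2,A^\star)$: extending a Dirichlet eigenfunction by zero produces an element of $H^1_{A^\star}(\mathbb S^2)$ with the same quotient. For the reverse inequality I would take the radial first eigenfunction $\phi$ of $\mathbb S^2$ with antipodal poles; from Appendix \ref{appendix} it behaves like $\sin^\nu r$ near both poles and, in particular, vanishes at $-x_0$. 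I would multiply $\phi$ by a logarithmic cutoff $\chi_\delta$ equal to $1$ outside the geodesic ball $B_\delta(-x_0)$ and $0$ inside $B_{\delta^2}(-x_0)$. In dimension two the capacity of a point is zero, so $\int|d\chi_\delta|^2\to0$. Since $\phi$ is bounded, $\|d^{A^\star}(\chi_\delta\phi)-\chi_\delta d^{A^\star}\phi\|_{L^2}\to0$, so the Rayleigh quotient of $\chi_\delta\phi$ converges to $\mu_1(\mathbb S^2,A^\star)$. For $\eps$ small, $\chi_\delta\phi$ lies in $H^1_{0,A^\star}(D_{a_\eps})$, which gives $\limsup\lambda_1\le\mu_1$.

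The main obstacle I expect is justifying the radiality of the Dirichlet ground state in i). One also has to make sure the zero extension is legitimate within the weighted space, since the equality case of Lemma \ref{lem:optimsturmmliouville_2} is not needed here. In ii), the cutoff argument is routine once the boundedness of $\phi$ is known.
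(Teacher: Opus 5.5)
Your proof is correct and follows the paper's argument. In (i) the paper likewise restricts the min-max for $w_1(G^{\star}_\eps)$ to $f$ vanishing on $(a_\eps,M)$ and uses that the Dirichlet ground state of the centred cap is real and radial. In (ii) it squeezes $\lambda_1(D_R,A^{\star})$ between $\mu_1(\mathbb S^2,A^{\star})$ (via zero extension) and the Rayleigh quotient of a cut-off of $\sin^\nu r$.

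The only difference is the cutoff. The paper's is linear in $r$ on the annulus $2R-\pi\le r\le R$, which relies on $\sin^\nu r$ vanishing like $(\pi-r)^\nu$ at $-x_0$. Your logarithmic cutoff needs only boundedness of $\phi$ (zero capacity of a point), so it is slightly more robust. Your monotonicity step is harmless but not needed.
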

\begin{proof}
From the min-max principle we have that
\begin{multline}
w_1(G^{\star}_\eps)=\min_{0\ne f\in\mathcal F}\frac{\int_0^M\left(G^{\star}_\eps(a)f'(a)^2+\frac{4\pi^2\nu^2}{G^{\star}_\eps(a)}f(a)^2\right)da}{\int_0^Mf^2(a)da}\\
\leq\min_{\substack{0\ne f\in\mathcal F\\f\equiv 0{\rm\ on\ }(a_\eps,M)}}\frac{\int_0^M\left(G^{\star}_\eps(a)f'(a)^2+\frac{4\pi^2\nu^2}{G^{\star}_\eps(a)}f(a)^2\right)da}{\int_0^Mf^2(a)da}\\
=\min_{\substack{0\ne f\in\mathcal F\\f(a_\eps)=0}}\frac{\int_0^{a_\eps}\left(G^{\star}(a)f'(a)^2+\frac{4\pi^2\nu^2}{G^{\star}(a)}f(a)^2\right)da}{\int_0^{a_\eps}f^2(a)da}=\lambda_1(D_{a_\eps},A^\star).
\end{multline}
The last equality holds since for the disk with Aharonov-Bohm potential with pole in the center and flux $\nu\in(0,\frac 12]$, there exists a first real and radial eigenfunction, as in the Neumann case. This proves i). Point ii) is proved in Lemma \ref{lem_app_conv}. Namely, in Lemma \ref{lem_app_conv} we show  that the first magnetic Dirichlet eigenvalue on a spherical cap with pole at its center coverges to the first eigenvalue on the sphere with two opposite poles when the cap invades the whole sphere.
\end{proof}

\subsection{The equality case in Theorem \ref{main_closed}}
Suppose now that $\mu_1(\Sigma,A)=\mu_1(\mathbb S^2,A^{\star})$. Let $\eps_0>\eps_1>\eps_2\cdots>\eps_n\to 0$ as $n\to\infty$.

Then
\begin{multline}
\mu_1(\mathbb S^2,A^{\star})=\mu_1(\Sigma,A)\leq w_1(G)\leq w_1(G^{\star}_{\eps_1})<w_1(G^{\star}_{\eps_2})\\<\cdots<w_1(G^{\star}_{\eps_n})\leq\lambda_1(D_{a_{\eps_n}},A^\star)\to\mu_1(\mathbb S^2,A^{\star}).
\end{multline}
By Lemma \ref{lem:optimsturmmliouville_2} we have that the inequalities $w_1(G^{\star}_{\eps_n})<w_1(G^{\star}_{\eps_{n+1}})$ are strict, since by construction $G^{\star}_{\eps_n}>G^{\star}_{\eps_{n+1}}$ on a set of positive measure. Hence all the inequalities must be equalities, which is not possible, unless $M=4\pi$ and hence $\Sigma=\mathbb S^2$. Thus we are in the case of the sphere $\mathbb S^2$ with two poles $x_0,y$ as in Corollary \ref{cor_2poles}. If equality holds, then $G=G^{\star}$ in $(0,4\pi)$ and this implies that $y=-x_0$ by Lemma \ref{lem:isoper}.

\subsection{Proof of Theorem \ref{main_closed_2}}
The proof of Theorem \ref{main_closed_2} is exactly the same as that of Theorem \ref{main_closed}, except that now $G^{\star}_{\eps}$ is defined as
$$
G^{\star}_\eps=\begin{cases}
G^{\star}\,, & {\rm in\ } (0,a_\eps)\\
\eps\,, & {\rm in\ } (a_\eps,M).
\end{cases}
$$
for all positive $\eps\leq\eps_0$, for a suitable $\eps_0>0$. The equality case is discussed as in Theorem \ref{main_closed}.

\subsection{Hersch's Theorem fails: sharpness of Theorem \ref{main_closed}}\label{sub:sharp}
In this section we want to show that without an assumption on the Gaussian curvature, we cannot have a result like Hersch's Theorem for the usual Laplace-Beltrami operator.

\begin{thm}\label{nohersch}
One has
$$
\sup_{\Sigma,\, z_1,z_2\in\Sigma}|\Sigma|\,\mu_1(\Sigma,A^{(\nu)}_{z_1,z_2})=+\infty
$$
for all $\nu\notin\mathbb Z$.
\end{thm}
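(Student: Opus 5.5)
We need to build simply connected closed surfaces with two poles for which $|\Sigma|\mu_1$ is arbitrarily large. The natural candidate is a long thin cylinder capped at both ends, with one pole in each cap. On a flat cylinder $\mathbb S^1_\rho\times[-L,L]$ of radius $\rho$, the potential $\nu d\theta$ has flux $\nu$ around every cross-sectional circle. Expanding in Fourier modes $e^{ik\theta}$, one gets $|d^Au|^2\geq \rho^{-2}(k-\nu)^2|u_k|^2$, so the ground state energy is at least $\rho^{-2}\min_{k\in\mathbb Z}|k-\nu|^2$ while the area is $4\pi\rho L$. This already suggests the product can blow up. Keeping $\rho$ fixed (say $\rho=1$) and letting $L\to\infty$, the area grows linearly while $\mu_1$ stays bounded below by a positive constant $c_\nu=\min_k|k-\nu|^2$ on the cylindrical part. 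The whole difficulty is controlling the caps.

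\textbf{Construction.} Let $\Sigma_L$ be a smooth rotationally symmetric sphere made of a fixed hemispherical cap $C_+$, a cylinder of length $2L$ and radius $1$, and a fixed cap $C_-$. Let $z_1,z_2$ be the two tips (the fixed points of the rotation). By rotational symmetry the Green function $\psi_{z_1,z_2}$ depends only on the meridian coordinate, so $\star d\psi$ is a multiple of $d\theta$. Since its flux around each parallel is $\nu$, we get $A^{(\nu)}_{z_1,z_2}=\nu\,d\theta$ exactly, where $\theta$ is the rotation angle. Gauge invariance (Theorem \ref{shi}) also ensures that any closed form with the same flux gives the same spectrum, so this identification is harmless.

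\textbf{Lower bound.} Write the metric as $ds^2+h(s)^2d\theta^2$, with $h\equiv1$ on the cylinder and $0<h\leq 1$ everywhere (true for hemispherical caps of radius $1$). For $u=\sum_k u_k(s)e^{ik\theta}$ we have
\[
\int_{\Sigma}|d^Au|^2\geq\sum_k\int\frac{(k-\nu)^2}{h(s)^2}|u_k|^2\,2\pi h\,ds\geq c_\nu\int_\Sigma|u|^2,
\]
using $h\leq1$. Hence $\mu_1(\Sigma_L,A)\geq c_\nu=\min_{k}|k-\nu|^2>0$ uniformly in $L$. Meanwhile $|\Sigma_L|\geq 4\pi L\to\infty$, so $|\Sigma_L|\mu_1\to+\infty$.

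The main point to check is the identification of the Aharonov–Bohm potential with $\nu\,d\theta$ near the poles, including the singular behaviour there. The pointwise bound $|\nu d\theta|^2=\nu^2/h^2$ handles this automatically, since the form domain consists of $H^1_A$ functions and the Fourier estimate is valid on $\Sigma\setminus\{z_1,z_2\}$, which has full measure.

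This also shows why Theorem \ref{main_closed} is consistent: its bound $\nu(\nu+1)$ (with $K=1$) controls $\mu_1$ itself, and here $\mu_1$ indeed stays bounded. Without the curvature assumption one could instead shrink $\rho$ and rescale to make $\mu_1$ large at fixed area.
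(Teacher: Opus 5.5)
Your proof is correct. It uses the same cigar-shaped surface as the paper (unit hemispheres capping a unit-radius cylinder, poles at the tips), but you get the uniform lower bound $\mu_1(\Sigma_L,A)\ge\min_{k\in\mathbb Z}|k-\nu|^2$ by a different argument.

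The paper splits $\Sigma_L$ into the two hemispheres and the cylinder and bounds each piece by its Neumann ground state. It quotes the explicitly computed value $\nu(\nu+1)$ for the hemisphere with pole at its center (Appendix A, which rests on the Legendre-function analysis of the sphere with antipodal poles). It quotes $\nu^2$ for the flat cylinder (Appendix B). It then sums the three Rayleigh-quotient inequalities.

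You instead drop the meridian derivative and expand in angular Fourier modes on each parallel. Using $h\le 1$, this gives $(k-\nu)^2h^{-1}\ge \min_j|j-\nu|^2\, h$ for every mode.

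What your route buys:
\begin{itemize}
\item It is self-contained and needs no spectral computation on the caps.
\item It handles every $\nu\notin\mathbb Z$ directly, without first reducing to $(0,\frac12]$ by gauge invariance.
\item It proves more: $\mu_1\ge \min_k|k-\nu|^2/\max h^2$ for any surface of revolution with the poles at the tips. So the caps can be smoothed, making $\Sigma_L$ genuinely smooth rather than only $C^{1,1}$, and nothing changes.
\end{itemize}

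The paper's route gives a sharper local bound on the caps, $\nu(\nu+1)$ instead of $\nu^2$. That gain is discarded in the final estimate, though the paper needs those appendix computations elsewhere anyway.

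Your identification $A^{(\nu)}_{z_1,z_2}=\nu\,d\theta$ is also fine: the Green function depends only on the meridian coordinate by rotational symmetry, and the flux condition on each parallel fixes the coefficient. The paper takes this step for granted.
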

\begin{proof}
We prove the theorem with the following family of surfaces. For any $L>0$  let $\Sigma_{L}$ be the cigar-like surface obtained by gluing two hemispheres $\Omega^+$, $\Omega^-$ of radius $1$ to the boundary components of the cylinder $\Gamma_{L}=\{(x,y,z)\in\mathbb R^3:x^2+y^2=1,0\leq z\leq L\}$, see Figure \ref{fig_cygar}. The surface $\Sigma_{L}$ is punctured at the poles $p^+,p^-$ of the hemispheres. Let $A^{(\nu)}_{p^+,p^-}$ be the Aharonov-Bohm potential with poles $p^+,p^-$ and flux $\nu$.
By gauge invariance, we can take $\nu\in(0,\frac 12]$.
We show that
\begin{equation}\label{lower}
\mu_1(\Sigma_{L},A^{(\nu)}_{p^+,p^-})\geq\nu^2,
\end{equation} 
independently on $L$. In fact, from Appendix \ref{appendix} we have that $\mu_1(\Omega^{\pm},A^{(\nu)}_{p^+,p^-})=\nu(\nu+1)$, while from Appendix \ref{appendixb} we have that $\mu_1(\Gamma_{L},A^{(\nu)}_{p^+,p^-})=\nu^2$. To simplify the notation, for the rest of the proof we shall write $A=A^{(\nu)}_{p^+,p^-}$. Then for all $u\in H^1_A(\Sigma_{L})$ we have
$$
\int_{\Gamma_{L}}|d^Au|^2\geq\nu^2\int_{\Gamma}|u|^2
$$
and
$$
\int_{\Omega^{\pm}}|d^Au|^2\geq\nu(\nu+1)\int_{\Omega^{\pm}}|u|^2.
$$
Summing up, since $\nu(\nu+1)\geq\nu^2$, we get 
$$
\int_{\Sigma_{L}}|d^Au|^2\geq\nu^2\int_{\Sigma_{L}}|u|^2
$$
for any $u\in H^1_A(\Sigma_{L})$, hence \eqref{lower}.
It follows that
$$
\abs{\Sigma_L}\mu_1(\Sigma_{L},A^{(\nu)}_{p^+,p^-})\geq 2\pi\nu^2(L+2)
$$
which grows linearly to infinity as $L\to\infty$.
The assertion follows.
\end{proof}

\begin{figure}[ht] 
\begin{center} 
  \includegraphics[width=\textwidth]{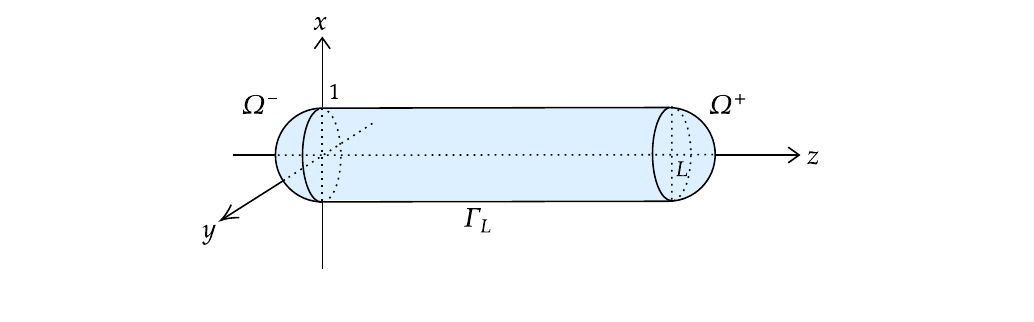}\\ 
  \caption{The cigar-like surface $\Sigma_{L}$}
  \label{fig_cygar}
\end{center} 
\end{figure}

A final remark is that in Theorem \ref{nohersch} we give examples of closed surfaces for which we have lower bounds of the type $\nu^2K$ which are independent on the area, where $K$ is an upper bound on the Gaussian curvature ($K=\frac{1}{R^2}$ for $\Sigma_{L,R}$ in the proof of Theorem \ref{nohersch}).

\subsection{Sharpness of Theorem \ref{main_closed_2}} It is enough to consider the example in the previous subsection and remove one of the two spherical caps $\Omega^+$ or $\Omega^-$.

\appendix

\section{Spectrum of the punctured sphere, and more}\label{appendix}

\subsection{Spectrum of the punctured sphere}In this appendix  we compute explicitly the spectrum of the sphere with Aharonov-Bohm potential with two opposite poles and flux $\nu\notin\mathbb Z$.

Let $x_0\in\mathbb S^2$ be a fixed point. Consider polar coordinates $(r,\theta)$ based at $x_0$, where $r$ is the geodesic distance from $x_0$ and $\theta$ is the polar angle.  Let $A^{(\nu)}_{x_0}=\nu d\theta$. It is closed and has flux $\nu$ around any loop separating $x_0$ and $-x_0$.  

\begin{prop}\label{prop:specsphere}
    Let $\nu\in (0,\frac{1}{2}]$. The spectrum of the magnetic Laplacian with potential $A^{(\nu)}_{x_0}$ on $\mathbb S^2$ is described as follows:
    \begin{enumerate}[a)]
        \item if $\nu\in (0,\frac{1}{2})$ then the eigenvalues are given by the union of the following sequences of numbers (counted with multiplicities)
        \begin{align*}
            \{(n-\nu)(n-\nu+1)\}_{n\geq 1}& \text{ with multiplicity }n \\
            \{(\abs n+\nu)(\abs n+\nu+1)\}_{n\leq 0}& \text{ with multiplicity }|n|+1 
        \end{align*}
        \item if $\nu=\frac{1}{2}$ then the eigenvalues are given by the following sequence of numbers (counted with multiplicity) 
        \begin{equation*}
            \{(n-\nu)(n-\nu+1)\}_{n\geq 1}\text{ with multiplicity }2n 
        \end{equation*}
   
\item In particular, if $\nu\in (0,\frac  12)$ one has 
$$
\mu_1(\mathbb S^2,A^{(\nu)}_{x_0})=\nu(\nu+1)
$$
with eigenspace spanned by $u(r,\theta)=\sin^{\nu}(r)$.  If $\nu=\frac 12$ the lowest eigenvalue is $\frac 34$, it is double, and the eigenspace is spanned by $\sin^{\frac 12}(r)$, $e^{i\theta}\sin^{\frac 12}(r)$.
\item If $\nu\in\mathbb R$, $\nu\notin\mathbb Z$, formulae a), b) and c) hold with $\nu$ replaced by $\min_{m\in\mathbb Z}|m-\nu|$.

\end{enumerate}
 \end{prop}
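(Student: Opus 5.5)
We compute the spectrum by separation of variables in polar coordinates $(r,\theta)$ based at $x_0$, where the metric is $dr^2+\sin^2 r\,d\theta^2$ and $A=\nu\,d\theta$. Since $A$ has no $dr$ component, the magnetic Laplacian acting on $u=e^{in\theta}\phi(r)$, $n\in\mathbb Z$, reduces to
$$
\Delta_A u=e^{in\theta}\Big(-\frac{1}{\sin r}(\sin r\,\phi')'+\frac{(n-\nu)^2}{\sin^2 r}\phi\Big).
$$
Because $\{e^{in\theta}\}_{n\in\mathbb Z}$ is a complete orthogonal system in $L^2(\mathbb S^1)$, the spectrum of $\Delta_A$ is the union over $n\in\mathbb Z$ of the spectra of these radial operators. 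This is the usual decomposition of the spherical Laplacian, except that the integer order $|n|$ is replaced by the real order $m:=|n-\nu|$.

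For fixed $m$ we set $x=\cos r$. The radial equation then becomes the associated Legendre equation
$$
(1-x^2)\phi''-2x\phi'-\frac{m^2}{1-x^2}\phi+\lambda\phi=0
$$
on $(-1,1)$, with the requirement that $u$ lie in the form domain. Near the poles this forces $\phi\sim(1-x^2)^{m/2}$, i.e.\ $\phi\sim\sin^m r$, and rules out the $\sin^{-m}r$ branch (that branch, or a logarithm when $m=0$, makes $\int|d^Au|^2$ infinite). We substitute $\phi=(1-x^2)^{m/2}y(x)$ and obtain the Gegenbauer/Jacobi equation
$$
(1-x^2)y''-2(m+1)xy'+\big(\lambda-m(m+1)\big)y=0.
$$
Its solutions that are regular at both endpoints are exactly the polynomials $C_k^{(m+1/2)}$, with $\lambda=(m+k)(m+k+1)$, $k\ge0$. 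The standard argument is as follows: a non-polynomial solution, which is a hypergeometric series, has a singularity at $x=\pm1$ that violates the $H^1_A$ condition. Completeness of Jacobi polynomials in the weighted $L^2$ space then shows that no other eigenvalues occur for each $n$. I expect this step, namely excluding non-polynomial solutions and justifying which branch is admissible at the singular endpoints, to be the main technical point. It can be handled by the Frobenius exponents $\pm m/2$ at $x=\pm1$ together with Gauss's hypergeometric connection formula.

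Collecting terms, the eigenvalues are $(|n-\nu|+k)(|n-\nu|+k+1)$ for $n\in\mathbb Z$ and $k\ge0$. For $\nu\in(0,\tfrac12)$ we have:
\begin{itemize}
\item for $n\ge1$, $|n-\nu|=n-\nu$;
\item for $n\le0$, $|n-\nu|=|n|+\nu$.
\end{itemize}
Hence every value has the form $(j-\nu)(j-\nu+1)$ with $j\ge1$, or $(j+\nu)(j+\nu+1)$ with $j\ge0$. We count multiplicities with the parameter $j=|n-\nu|+k\pm\nu$. The value $(j-\nu)(j-\nu+1)$ arises from the pairs $(n,k)$ with $n\ge1$ and $n+k=j$, which gives $j$ pairs. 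The value $(j+\nu)(j+\nu+1)$ arises from the pairs with $n\le0$ and $|n|+k=j$, which gives $j+1$ pairs. The two families never coincide, since $j-\nu=j'+\nu$ would force $2\nu\in\mathbb Z$. This proves a).

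For $\nu=\tfrac12$ the two families merge: $j+\tfrac12=(j+1)-\tfrac12$. Adding the multiplicities $j$ and $j$ (after shifting the index) gives $2j$, which proves b).

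For c), the smallest value is attained at $n=0$, $k=0$, giving $\nu(\nu+1)$ with eigenfunction $\phi=\sin^\nu r$. This value is strictly below $(1-\nu)(2-\nu)$ when $\nu<\tfrac12$. When $\nu=\tfrac12$ it ties with the $n=1$, $k=0$ mode $e^{i\theta}\sin^{1/2}r$, giving $\tfrac34$.

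Finally, d) follows from gauge invariance (Theorem~\ref{shi}). Replacing $\nu$ by $\nu-m$ with $m\in\mathbb Z$ changes $A$ by the closed form $m\,d\theta$, which has integer flux, so the spectrum is unchanged. Conjugation $u\mapsto\bar u$ maps $\Delta_A$ to $\Delta_{-A}$, so the spectrum is also unchanged under $\nu\mapsto-\nu$. Together these reduce any $\nu\notin\mathbb Z$ to $\min_m|m-\nu|\in(0,\tfrac12]$.
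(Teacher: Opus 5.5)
Your proposal is correct and follows essentially the same route as the paper: separation of variables into radial problems of non-integer order $|n-\nu|$, identification of the admissible eigenvalues $(|n-\nu|+k)(|n-\nu|+k+1)$ by excluding the singular branch at the poles, and the same pair-counting for the multiplicities. The differences are only technical. You pass to the Gegenbauer form and use Gauss's connection formula, or completeness of the Jacobi polynomials, which also gives completeness within each Fourier mode; the paper instead works with the Legendre functions $P^{-|n-\nu|}_\alpha$ and their connection formula, and you also make the gauge/conjugation argument for d) explicit.
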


When $\nu\in (0,\frac  12)$ the first few eigenvalues $\mu_k\doteq \mu_k(\mathbb S^2,A^{(\nu)}_{x_0})$ are then given by
$$
\begin{aligned}
&\mu_1=\nu(\nu+1), \quad \mu_2=(1-\nu)(2-\nu)\\
&\mu_3=\mu_4=(1+\nu)(2+\nu), \quad \mu_5=\mu_6=(2-\nu)(3-\nu) \quad \text{etc.}
\end{aligned}
$$

\begin{rem}
    Note that at the limit as $\nu\to 0$ we recover the spectrum of the Laplace-Beltrami operator with multiplicities: the eigenvalues $(n-\nu)(n-\nu+1)$ go to $n(n+1)$ with multiplicity $n$ ($n\geq 1$) and the eigenvalues $(\nu-n)(\nu-n+1)$ go to $|n|(|n|+1)$ with multiplicity $|n|+1$ ($n\leq 0$). Altogether, the eigenvalues are given by $\ell(\ell+1)$ with multiplicity $2\ell+1$, $\ell=0,1,2,\cdots$. 
\end{rem}

\begin{proof}
Through the proof we shall simply write $A\doteq A^{(\nu)}_{x_0}=\nu d\theta$. Writing the magnetic Laplacian in polar coordinates $(r,\theta)$ based at $x_0$ we get that, for any smooth, complex-valued function $u$,
\begin{equation}\label{magnetic_polar}
\Delta_Au=-\partial^2_{rr}u-\cot(r)\partial_ru-\frac{1}{\sin^2(r)}\partial^2_{\theta\theta}u+\frac{\nu^2}{\sin^2(r)}u+\frac{2i\nu}{\sin^2(r)}\partial_{\theta}u.
\end{equation}
We have no boundary, and the presence of the two singularities is taken into account in the choice of the energy space for the weak formulation, which is $H^1_A(\mathbb S^2):=\{u\in L^2(\mathbb S^2):|d^Au|\in L^2(\mathbb S^2)\}$. This imposes certain conditions at the two poles.

As customary, looking for solutions to $\Delta_Au=\mu u$ of the form $u(r,\theta)=e^{in\theta}v(r)$, $n\in\mathbb Z$, we end up with a family of Sturm-Liouville  problems indexed by $n\in\mathbb Z$:
\begin{equation}\label{SL_sphere}
\begin{cases}
v''(r)+\cot(r)v'(r)+\left(\mu-\frac{(n-\nu)^2}{\sin^2(r)}\right)v(r)=0\,, & {\rm\ in\ }(0,\pi)\,,\\
\lim_{r\to 0^+}\sin(r)v'(r)=\lim_{r\to\pi^-}\sin(r)v'(r)=0.
\end{cases}
\end{equation}
The boundary conditions at the singular endpoints encode the information on the energy space. Now, as shown e.g., in \cite[Lemma 23]{CPS22}, each problem \eqref{SL_sphere} admits an increasing sequence of simple eigenvalues
$$
0<\mu_{n0}<\mu_{n1}<\cdots<\mu_{nk}<\cdots\nearrow +\infty
$$
with associated eigenfunctions $v_{nk}$ which have $k$ zeros in $(0,\pi)$. First we will compute explicitly $\mu_{nk}$. Then we show that these numbers exhaust all the spectrum.

{\bf Step 1: compute $\mu_{nk}$.}  Recall that $\nu\notin\mathbb Z$. First, for all $n\in\mathbb Z$, we identify $\mu_{n0}$. In fact, take $v(r)=\sin^{|n-\nu|}(r)$. It does not change sign in $(0,\pi)$ and solves \eqref{SL_sphere} with $\mu=|n-\nu|(|n-\nu|+1)$. Thus we deduce that
$$
\mu_{n0}=|n-\nu|(|n-\nu|+1).
$$
When $\nu\in(0,\frac 12)$, the first eigenvalue is attained at $n=0$, and equals
$$
\mu_{00}=\nu(\nu+1);
$$
it is simple, and the corresponding eigenspace is spanned by $\sin^{\nu}(r)$. When $\nu=\frac 12$ we have that the first eigenvalue is
$$
\mu_{00}=\mu_{10}=\frac 34;
$$
it is double, and the corresponding eigenspace is spanned by $\sin^{\frac 12}(r)$ and $e^{i\theta}\sin^{\frac 12}(r)$.

Now we need to compute all the other eigenvalues. We first observe that a solution of the differential equation in \eqref{SL_sphere} is given by
$$
P_{\alpha}^{-|n-\nu|}(\cos(r)),
$$
 where $\alpha\in\mathbb R$ is related to the eigenvalue $\mu$ and is defined by
$$
\mu=\alpha(\alpha+1).
$$
Here $L^{b}_{a}$ denotes the Legendre function of degree $a$ and order $b$, see \cite[\S 14]{nist}. Note  that $P_{\alpha}^{-\alpha}(\cos(r))=c_{\alpha}\sin^{\alpha}(r)$, where $c_{\alpha}$ is a normalization constant.

Note also that there are two linearly independent solutions of the differential equation in \eqref{SL_sphere} which are possibly singular at the endpoints $0,\pi$.  
From \cite[\S 14.8, 14.8.1]{nist} we have
\begin{equation}\label{nist1}
P_{\alpha}^{-|n-1|}(x)\sim\frac{1}{\Gamma(|n-\nu|+1)}\left(\frac{1-x}{2}\right)^{\frac{|n-\nu|}{2}}\,,\ \ \ x\to 1^-
\end{equation}
hence, with our choice of taking $P_{\alpha}^{-|n-\nu|}$ we have
$$
\lim_{r\to 0^+}P_{\alpha}^{-|n-\nu|}(\cos(r))=0.
$$
Hence, an eigenvalue $\mu$ is determined by imposing
$$
\lim_{r\to \pi^-}P_{\alpha}^{-|n-\nu|}(\cos(r))=0.
$$
From \cite[\S 14.9, 14.9.7]{nist} we have
\begin{multline}\label{P-}
P_{\alpha}^{-|n-\nu|}(-x)=\frac{\sin(\nu\pi)}{\sin(|n-\nu|\pi)}P^{-|n-\nu|}_{\alpha}(x)\\
-\frac{\sin((|n-\nu|-\alpha)\pi)\Gamma(\alpha-|n-\nu|+1)}{\sin(|n-\nu|\pi)\Gamma(\alpha+|n-\nu|+1)}P^{|n-\nu|}_{\alpha}(x).
\end{multline}
Note that this formula is valid provided $|n-\nu|\notin\mathbb Z$, which is our case, and $\alpha\pm|n-\nu|+1\ne-1,-2,-3,...$.

We check the limit of the right-hand side of \eqref{P-} as $x\to 1^-$. From \cite[\S 14.8, 14.8.1]{nist} we have
\begin{equation}\label{nist2}
P_{\alpha}^{|n-1|}(x)\sim\frac{1}{\Gamma(1-|n-\nu|)}\left(\frac{2}{1-x}\right)^{\frac{|n-\nu|}{2}}\,,\ \ \ x\to 1^-,
\end{equation}
hence the left-hand side of \eqref{P-} is unbounded at $-1^+$ unless
$$
\alpha=|n-\nu|+k\,,\ \ \ k\geq 0.
$$
It remains to consider the cases $\alpha=\pm|n-\nu|+k-1$, $k\in\mathbb Z$, $k\leq -1$. We discard the case $\alpha=-|n-\nu|+k-1$: in fact, a corresponding eigenvalue would be $\mu=\alpha(\alpha+1)=(-|n-\nu|+k-1)(-|n-\nu|+k)=(|n-\nu|+\ell)(|n-\nu|+\ell+1)$ with $\ell=-k\in\mathbb Z$, $\ell\geq 1$, which we already know to be in the spectrum. Hence we are left with the case $\alpha=|n-\nu|+k-1$, $k\in\mathbb Z$, $k\leq -1$. In this case, formula \eqref{P-} is replaced by the following
\begin{multline}\label{P-2}
P_{|n-\nu|+k-1}^{-|n-\nu|}(-x)=\frac{\sin(\nu\pi)}{\sin(|n-\nu|\pi)}P^{-|n-\nu|}_{|n-\nu|+k-1}(x)\\
-\frac{\pi}{\sin(|n-\nu|\pi)(-k)!\Gamma(k+2|n-\nu|)}P^{|n-\nu|}_{\alpha}(x),
\end{multline}
and by \eqref{nist2} we see that the right-hand side is unbounded near $1^-$, hence the left-hand side is unbounded near $-1^+$.

\medskip

So far, we have found that for all $n\in\mathbb N$, the eigenvalues of \eqref{SL_sphere} are given by the collection
$$
\{(|n-\nu|+k)(|n-\nu|+k+1)\}_{k=0}^{\infty}.
$$

{\bf Step 2: compute multiplicities.} To count multiplicities, we assume for simplicity that $\nu\in(0,\frac 12]$.
We know  that the first eigenvalue is given by
$$
\mu_{00}=\nu(\nu+1).
$$
We have to distinguish the case $\nu\in(0,\frac 12)$ and $\nu=\frac 12$. 

\smallskip

If $\nu\in(0,\frac 12)$, then we consider the two families of eigenvalues corresponding to $n\geq 1$ and $n\leq 0$. In fact, we cannot find $k,j$ such that
$$
\mu_{nk}=\mu_{mj}
$$
for $n\geq 1$ and $m\leq 0$: we would have $n-\nu+k=-m+\nu+j$, which means $n+k+m-j=2\nu\notin\mathbb Z$. Now, we count the multiplicities for $n\geq 1$. The eigenvalue $(2-\nu)(1-\nu)$ occurs only for $n=1$ and it is the first eigenvalue $\mu_{10}$, hence it is simple. The eigenvalue $(2-\nu)(3-\nu)$ occurs only for $n=1,2$: it is the second eigenvalue $\mu_{11}$ for $n=1$ and the first eigenvale $\mu_{20}$ for $n=2$. Hence it has multiplicity $2$. In this way we see that the eigenvalue $(n-\nu)(n-\nu+1)$ has multiplicity $n$. Reasoning in the same way for $n\leq 0$, we see that the eigenvalue $(\nu-n)(\nu-n+1)$ has multiplicity $-n+1=|n|+1$. In particular, for $n=0$, the eigenvalue $\nu(\nu+1)$ has multiplicity $1$, and this is the first eigenvalue and it is simple:
$$
\mu_1(\mathbb S^2,A)=\nu(\nu+1).
$$

Next, we consider the case $\nu=\frac 12$. Clearly, all the discussion above holds, and we have two families of eigenvalues given by $(n-\nu)(n-\nu+1)$, $n\geq 1$ and multiplicity $n$, and $(\nu-m)(\nu-m+1)$ and multiplicity $-m+1=|m|+1$ for $m\leq 0$. However we note that the eigenvalues coincide if $n+m=1$. For example, for $n=1,m=0$ we have the eigenvalue $3/4$ with multiplicity $2$. For $n=2,m=-1$ we have the eigenvalue $15/4$ with multiplicity $4$. Altogether, we have that $(n-\nu)(n-\nu+1)$, $n\geq 1$, has multiplicity $2n$.

{\bf Completeness of the family of eigenfunctions.} To show that we have exhausted all the spectrum, we need to show that the family of functions $u_{nk}(r,\theta)=v_{nk}(r)e^{in\theta}$ is a complete system of eigenfunctions in $H^1_A(\mathbb S^2)$. This is a standard fact, and one can find a proof for more general manifolds of revolution around a point in  \cite[Appendix B]{CPS22}. 
\end{proof}

\subsection{Spectrum of the punctured hemisphere}
As a corollary of the above proposition we have that the spectrum of the hemisphere (with either Dirichlet or Neumann conditions) with pole at the center is equal to the spectrum of the whole sphere with antipodal poles (with different multiplicities).    

\medskip

Let $\mathbb S^2_+$ denote a hemisphere, and let $x_0$ be its center. Let $A^{(\nu)}_{x_0}=\nu d\theta$ the Aharonov-Bohm potential with pole $x_0$ and flux $\nu$. Then
\begin{equation*}
\{\mu_k(\mathbb S^2,A^{(\nu)}_{x_0})\}_{k=1}^\infty=\{\lambda_k(\mathbb{S}^2_+,A^{(\nu)}_{x_0})\}_{k=1}^\infty \cup \{\mu_k(\mathbb{S}^2_+,A^{(\nu)}_{x_0})\}_{k=1}^\infty
\end{equation*}
and the union is disjoint: namely, the spectrum on the whole sphere with opposite poles is the disjoint union of the Dirichlet and Neumann spectra for the potential $A^{(\nu)}_{x_0}$ on the hemisphere $\mathbb S^2_+$. The proof is standard, see e.g., \cite{bebe}. It is enough to observe that the radial parts of spherical eigenfunctions are either even or odd with respect to the equator. The restrictions of eigenfunction with odd radial part to $\mathbb S^2_+$ are Dirichlet eigenfunctions on $\mathbb S_+$, while the restrictions of the eigenfunctions with even radial part to $\mathbb S^2_+$ are Neumann eigenfunctions on $\mathbb S^2_+$. Vice-versa, odd/even reflecting Dirichlet/Neumann eigenfunctions of $\mathbb S^2_+$ give eigenfunctions on $\mathbb S^2$. Counting multiplicities we have:

\medskip

{\bf Dirichlet eigenvalues:} they are given by
  \begin{enumerate}
  \item If $\nu\in(0,\frac 12)$
\begin{align*}
            \{(n-\nu)(n-\nu+1)\}_{n\geq 1}& \text{ with multiplicity }{\frac{n-1}{2}}{\rm\  for\ } n\ {\rm odd\ and\ }{\frac{n}{2}}{\rm\  for\ } n\ {\rm even} \\
            \{(\abs n+\nu)(\abs n+\nu+1)\}_{n\leq 0}& \text{ with multiplicity }{\frac{|n|+1}{2}}{\rm\  for\ } n\ {\rm odd\ and\ }{\frac{|n|}{2}}{\rm\  for\ } n\ {\rm even} 
        \end{align*}
        \item if $\nu=\frac{1}{2}$  
        \begin{equation*}
            \{(n-\nu)(n-\nu+1)\}_{n\geq 1}\text{ with multiplicity }{n-1}{\rm\  for\ } n\ {\rm odd\ and\ }{n}{\rm\  for\ } n\ {\rm even} 
        \end{equation*}
    \end{enumerate}

\medskip
{\bf Neumann eigenvalues:} they are given by
  \begin{enumerate}
  \item If $\nu\in(0,\frac 12)$
\begin{align*}
            \{(n-\nu)(n-\nu+1)\}_{n\geq 1}& \text{ with multiplicity }{\frac{n+1}{2}}{\rm\  for\ } n\ {\rm odd\ and\ }{\frac{n}{2}}{\rm\  for\ } n\ {\rm even} \\
            \{(\abs n+\nu)(\abs n+\nu+1)\}_{n\leq 0}& \text{ with multiplicity }{\frac{|n|+1}{2}}{\rm\  for\ } n\ {\rm odd\ and\ }{\frac{|n|+2}{2}}{\rm\  for\ } n\ {\rm even} 
        \end{align*}
        \item if $\nu=\frac{1}{2}$  
        \begin{equation*}
            \{(n-\nu)(n-\nu+1)\}_{n\geq 1}\text{ with multiplicity }{n+1}{\rm\  for\ } n\ {\rm odd\ and\ }{n}{\rm\  for\ } n\ {\rm even} 
        \end{equation*}
    \end{enumerate}
    Note that the multiplicity of any Dirichlet/Neumann eigenvalue with $\nu=\frac 12$ is always an even number. For $\nu=0$ we find the multiplicities $n$ of the eigenvalue $n(n+1)$, $n\geq 0$, as Dirichlet eigenvalue of the Laplacian on $\mathbb S^2_+$, and the multiplicities $n+1$ of the eigenvalue $n(n+1)$, $n\geq 0$, as Neumann eigenvalue of the Laplacian on $\mathbb S^2_+$. By our convention, if the multiplicity is $0$ then the corresponding eigenvalue is not present in the spectrum.

\subsection{Limit of Dirichlet eigenvalues}
In the next lemma we prove that the first Dirichlet eigenvalue of a spherical cap centered at $x_0$ with potential $A^{(\nu)}_{x_0}$ converges to the first eigenvalue of the whole sphere $\mathbb S^2$ with the same potential when the radius of the cap tends to $\pi$.

\begin{lem}\label{lem_app_conv}
Let $D_R\subset\mathbb S^2$ be a geodesic disk of radius $R$ centered at $x_0$, and let $A^{(\nu)}_{x_0}=\nu d\theta$. Let $\lambda_1(D_R,A^{(\nu)}_{x_0})$ be the first eigenvalue of $\Delta_{A^{(\nu)}_{x_0}}$ with Dirichlet conditions on $\partial D_R$. Then
\begin{equation}
\lim_{R\rightarrow \pi} \lambda_1(D_R,A^{(\nu)}_{x_0})=\mu_1(\mathbb S^2,A^{(\nu)}_{x_0})
\end{equation}
\end{lem}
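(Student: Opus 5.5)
Write $\lambda(R)\doteq\lambda_1(D_R,A^{(\nu)}_{x_0})$ and $\mu^\star\doteq\mu_1(\mathbb S^2,A^{(\nu)}_{x_0})$, with $\nu\in(0,\frac12]$ by gauge invariance, so that $\mu^\star=\nu(\nu+1)$ by Proposition \ref{prop:specsphere}. The plan is to prove the two inequalities $\liminf_{R\to\pi}\lambda(R)\ge\mu^\star$ and $\limsup_{R\to\pi}\lambda(R)\le\mu^\star$ separately.

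\emph{Lower bound.} Any $u\in H^1_{0,A}(D_R)$, extended by zero to $\mathbb S^2$, belongs to $H^1_A(\mathbb S^2)$ and has the same magnetic energy and the same $L^2$ norm. Hence $\lambda(R)\ge\mu^\star$ for every $R<\pi$. The same extension argument gives domain monotonicity: $\lambda(R)$ is nonincreasing in $R$. Therefore the limit as $R\to\pi$ exists and is at least $\mu^\star$.

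\emph{Upper bound.} Take the ground state $v(r)=\sin^{\nu}(r)$ of the sphere and cut it off near the antipodal pole $-x_0$. Let $\chi_R$ be a radial function equal to $1$ on $D_{R_1}$ and $0$ outside $D_R$, where $R_1<R$ and both tend to $\pi$, and use $u_R=\chi_R v$ as a test function in \eqref{ev_dir}. Since $v$ is real,
$$
|d^Au_R|^2=|d(\chi_Rv)|^2+\frac{\nu^2}{\sin^2 r}\chi_R^2v^2 .
$$
The magnetic term is bounded by the corresponding term for $v$. The gradient term is at most $2\chi_R^2|dv|^2+2v^2|d\chi_R|^2$, so everything reduces to showing that
$$
\int v^2|d\chi_R|^2\to 0 .
$$
Setting $s=\pi-r$, this integral is $2\pi\int s^{2\nu}\chi'(s)^2 s\,ds$ near $s=0$. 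With a linear cutoff on $[\delta,2\delta]$ it is of order $\delta^{2\nu}\to0$, because $\nu>0$; a logarithmic cutoff would also work.

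The cross terms are handled by expanding $|d(\chi v)|^2=\chi^2|dv|^2+2\chi v\,d\chi\cdot dv+v^2|d\chi|^2$ and using Cauchy--Schwarz on the middle term. Since the numerator and the denominator of the Rayleigh quotient of $u_R$ converge to those of $v$, we get $\limsup_{R\to\pi}\lambda(R)\le\mu^\star$.

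An alternative route would use the explicit Legendre function $P^{-\nu}_\alpha(\cos r)$ and the continuity of its first zero in $\alpha$ as $\alpha\to\nu$. I would avoid it, since it requires delicate asymptotics at $-1$.

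The main obstacle is the cutoff estimate: one has to check that $v$, which vanishes at $-x_0$ like $s^\nu$, can be truncated at a cost tending to zero. This is where $\nu>0$ (that is, $\nu\notin\mathbb Z$) is used. The $2$-dimensional capacity of a point being zero would in any case give the same conclusion, even without the decay of $v$.
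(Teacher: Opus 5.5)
Your proposal is correct and takes essentially the same route as the paper. The paper also multiplies the ground state $\sin^{\nu}(r)$ by a radial cutoff that is linear on $2R-\pi\le r\le R$, which is exactly your $[\delta,2\delta]$ cutoff with $\delta=\pi-R$; it shows the annulus contributions vanish, and it gets $\mu_1(\mathbb S^2,A)\le\lambda_1(D_R,A)$ by extension by zero. Your monotonicity step is harmless but not needed, because the two-sided bound already squeezes $\lambda_1(D_R,A)$ to its limit.
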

\begin{proof}
Through the proof we set $A\doteq A^{(\nu)}_{x_0}$. We can always assume $R>\frac\pi 2$ and $\nu\in(0,\frac 12]$.
Consider the function $\varphi$ defined by
\begin{equation}
        \varphi(r)=\begin{cases}
            1 \,\,&\text{ in }\,\, r\in[0,2R-\pi]\\
            \frac{r-R}{R-\pi} \,\,&\text{ in }\,\, r\in[2R-\pi,R]\\
            0 \,\,&\text{ in }\,\, r\in[R,\pi]
        \end{cases}
    \end{equation}
Let $u$ be the first eigenfunction on $\mathbb S^2$, associated with $\mu_1(\mathbb S^2,A)$. From Proposition \ref{prop:specsphere}, we know that it is radial and is given (up to constant multiples) by $u(r)=\sin^\nu(r)$. Set $C_R\doteq D_R\setminus D_{2R-\pi}$ the spherical annulus obtained by removing from $D_R$ a the disk of radius $2R-\pi$ centered at $x_0$. Now, we have that $u\varphi\in H^1_{0,A}(D_R)$ and from the min-max principle we have
\begin{equation}\label{eq:varlimit}
\lambda_1(D_R,A)\leq \frac{\int_{D_R}|d^A(u\varphi)|^2}{\int_{D_R}(u\varphi)^2}=\frac{\int_{D_{2R-\pi}}|d^Au|^2+\int_{C_R}|d^A (u\varphi)|^2}{\int_{D_{2R-\pi}}u^2+\int_{C_R}u^2\varphi^2}.
\end{equation} 
Everything is explicit, in fact $|d^A(u\varphi)|^2=|d(u\varphi)|^2+|A|^2u^2\varphi^2=u^2\varphi^2+\varphi^2|du|^2+\frac{\nu^2}{\sin^2(r)}u^2\varphi^2$. Standard computations show that
\begin{align*}
&\lim_{R\rightarrow \pi} \int_{C_R}|d^A (u\varphi)|^2=0\\
&\lim_{R\rightarrow \pi} \int_{C_R} u^2\varphi^2=0.
\end{align*}
Moreover, $\int_{D_{2R-\pi}}|d^A u|^2\rightarrow \int_{\mathbb{S}^2}|d^A u|^2$ and $\int_{D_{2R-\pi}}u^2\rightarrow \int_{\mathbb{S}^2}u^2$ when $R\rightarrow \pi$,
 and then from \eqref{eq:varlimit} we get that 
\begin{equation*}
\lambda_1(D_R,A)\leq \mu_1(\mathbb S^2,A)+o(1)\,\,\text{as}\,\, R\to \pi.
\end{equation*}
On the other hand, let $u_1$ be an eigenfunction on $D_R$ associated with $\lambda_1(D_R,A)$. Its extension by zero to $\mathbb S^2$ belongs to $H^1_{0,A}(\mathbb S^2)$, hence, from the min-max principle for $\mu_1(\mathbb S^2,A)$ we deduce that $\mu_1(\mathbb S^2,A)\leq \lambda_1(D_R,A)$. This concludes the proof.
\end{proof}

\section{The spectrum of flat cylinders}\label{appendixb}

Let $\mathcal C_M=\mathbb S^1\times[-M,M]$, with coordinates $(\theta,z)$, endowed with the flat metric $d\theta^2+dz^2$. Take the $1$-form $A=\nu d\theta$, which is closed, co-closed and has flux $\nu$ around any boundary component of $\mathcal C_M$. If we think of $\mathcal C_M$ as a cylinder in $\mathbb R^3$, the potential $1$-form can be expressed in cartesian coordinates as $\frac{\nu}{x^2+y^2}(-y dx+x dy)$.

In order to compute $\mu_1(\mathcal C_M,A)$, we write the magnetic Laplacian $\Delta_A$ in cylindrical coordinates. We obtain 
\begin{align*}
    \Delta_A u&=\Delta u+|A|^2u^2+2i\langle A,\nabla u\rangle\\
    &=-u_{zz}-u_{\theta\theta}+\nu^2u+2i\nu u_{\theta}.
\end{align*}
By separation of variables, we can write the eigenfunctions in the form $u(\theta,z)=e^{in\theta}v_n(z)$, $n\in\mathbb Z$ (see e.g., \cite{PSannuli} for more details). Plugging this ansatz in the eigenvalue equation, and denoting by $\mu_n$ a corresponding eigenvalue, we get
\begin{equation*}
    -v_n''(z)=\left( \mu_n-\left| n- \nu\right |^2 \right )v_n(z)\,\,\,\,\, \forall z\in (-M,M),
\end{equation*}
and the boundary conditions, due to the fact that $\langle A,N\rangle=0$ on $\partial\mathcal C_M$, just read $v_n'(z)=0$ at $z=-M$ and $z=M$.
This is just a shifted Neumann problem on the segment $(-M,M)$, hence the spectrum is given by the collection
\begin{equation*}
\left\{|n-\nu|^2+\frac{k^2\pi^2}{4M^2}\right\}_{k\in\mathbb N,n\in\mathbb Z}
\end{equation*}
and the first eigenvalue is given by
\begin{equation*}
   \mu_1(\mathcal C_M,A)=\min_{n\in\mathbb Z}  \left| n- \nu\right |^2.
\end{equation*}
By gauge invariance, we may always assume $\nu\in(0,\frac 12]$, so  that the minimum is attained at $n=0$ and we can conclude that
\begin{equation*}
   \mu_1(\mathcal C_M,A)=\nu^2.
\end{equation*}
independently on $M$. The corresponding eigenspace is spanned by  the constant function $u_1=1$.

The fact that the first eigenfunction of any flat annulus has constant modulus will imply an upper bound of isoperimetric type on any Riemannian cylinder, similarly to what happens to the magnetic Steklov problem in cylinders, see \cite{PSannuli}. This fundamental fact is used in the proof of Theorem \ref{main_annulus}.

\begin{proof}[Proof of Theorem \ref{main_annulus}]
Let $\mathcal C$ be a Riemannian annulus, namely, a differentiable surface diffeomorphic to $\mathbb S^1\times[0,1]$ endowed with a Riemannian metric. It is known that any Riemanniann annulus $\mathcal C$ is conformally equivalent to the flat cylinder $\mathcal C_M$ for a unique $M>0$, which is called the {\it conformal modulus} of $\mathcal C$. In other words, there exists a conformal diffeomorphism $\Phi:\mathcal C\to\mathcal C_M$. Consider on $\mathcal C_M$ the potential $1$-form $A=\nu d\theta$, which is closed and has flux $\nu$. Consider on $\mathcal C$ the potential $1$-form $\Phi^{\star}A$, which is again closed, and with flux $\nu$. It is unique up to exact forms. By gauge invariance, the magnetic spectrum of $\mathcal C$ depends only on $\nu$, see e.g., \cite[\S4]{CPS22} or \cite{PSannuli} for more details. Let $u_1=1$. By the min-max principle we have

\begin{equation}
\mu_1(\mathcal C,\Phi^{\star}A)\leq\frac{\int_{\mathcal C}|d^{\Phi^{\star}A}u_1|^2}{\int_{\mathcal C}|u_1|^2}=\frac{\int_{\mathcal C}|\Phi^{\star}A|^2}{|\mathcal C|}=\frac{\int_{\mathcal C_M}|A|^2}{|\mathcal C|}=\frac{|\mathcal C_M|}{|\mathcal C|}\nu^2.
\end{equation}
That is
$$
|\mathcal C|\mu_1(\Sigma,\Phi^{\star}A)\leq|\mathcal C_M|\mu_1(\mathcal C_M,A).
$$
As already mentioned, the Aharonov-Bohm spectrum of a Riemannian cylinder depends only on the flux $\nu$, hence we may write $\mu_1(\mathcal C,\nu)\doteq\mu_1(\mathcal C,\Phi^{\star}A)$ and $\mu_1(\mathcal C_M,\nu)\doteq\mu_1(\mathcal C_M,A)$.
If equality holds, then $u_1=1$ is an eigenfunction of $\Delta_{\Phi^{\star}A}$ on $\mathcal C$, which means that $|\Phi^{\star}A|^2=\nu^2$. This means that $\mathcal C$ supports a closed $1$-form of constant length. But since since the metric on $\mathcal C$ is conformal to the flat metric, we have that $|\Phi^{\star}A|^2=\rho^2|A|^2=\rho^2$ for some conformal factor $\rho$, and this implies that $\rho$ is constant. Hence $\mathcal C$ is homothetic to $\mathcal C_M$.
\end{proof}

\bibliography{bibliography.bib}
\bibliographystyle{abbrv}
\end{document}